\documentclass{amsart}
\usepackage{graphicx}
\usepackage{amssymb,amscd}
\usepackage{color}
\usepackage{enumitem}
\usepackage{bm}

 \newtheorem{thm}{Theorem}[section]
 \newtheorem{thmA}{Theorem}
 \newtheorem{cor}[thm]{Corollary}
 \newtheorem{lemma}[thm]{Lemma}
 \newtheorem{prop}[thm]{Proposition}
 \theoremstyle{definition}
 \newtheorem{defn}[thm]{Definition}
 
 \newtheorem{quest}[thm]{Question}

 \theoremstyle{remark}
 \newtheorem{rem}[thm]{Remark}

 \numberwithin{equation}{subsection}


\newcommand{\FF}{\text{$\mathcal{F}$}}
\newcommand{\GG}{\text{$\mathcal{G}$}}

\newcommand{\str}{\operatorname{sat}}

        \newcommand{\field}[1]{\text{$\mathbb{#1}$}}
        \newcommand{\N}{\field{N}}
        \newcommand{\Z}{\field{Z}}
        
        \newcommand{\R}{\field{R}}


\newdimen\theight
\def\TeXref#1{%
             \leavevmode\vadjust{\setbox0=\hbox{{\tt
                     \quad\quad  {\small \textrm #1}}}%
             \theight=\ht0
             \advance\theight by \lineskip
             \kern -\theight \vbox to
             \theight{\rightline{\rlap{\box0}}%
             \vss}%
             }}%


\begin{document}

\title{Nonperiodic leaves of codimension one foliations}

\author{Carlos Meni\~no Cot\'on}

\address{$\ddagger$ CITMAGA, 15782 Santiago de Compostela, Spain; and Departamento de Matemática Aplicada 1, Universidad de Vigo, Escola de Enxe\~ner\'ia Industrial, Rua Conde de Torrecedeira 86, CP 36208, Vigo, Spain.}

\email{carlos.menino@uvigo.es}

\thanks{The author wants to thank to the the Ministerio de Ciencia e Innovación (Spain) grant
PID2020-114474GB-I00, the MathAmSud 2019-2020 CAPES-Brazil (``Rigidity and Geometric Structures on Dynamics'') and the CNPq research grant 310915/2019-8 that partially supported this research.}

\begin{abstract}
It is shown an example of a noncompact and nonperiodic $5$-manifold that is not homeomorphic to any leaf of any $C^2$ codimension one foliation of any compact $6$-manifold, but it is homeomorphic to a (proper) leaf of a $C^1$ codimension one foliation and also to a (proper) leaf of a $C^\infty$ codimension $2$ foliation. As far as we know, this is the first example of this nature. In addition, it is shown examples of codimension one $C^{r}$ foliations, $r\in[0,2)$, with a minimal invariant set whose leaves are pairwise nonhomeomorphic.
\end{abstract}

\maketitle

\section{Introduction}

In \cite{Sondow}, J. Sondow posed the question about what manifolds can be realized as leaves of foliation in a compact manifold, this is the so called ``Realization Problem''. The realization problem is in fact a list of problems that depend in two choices:
\begin{itemize}
\item[(1)] How the manifold must be realized as a leaf: up to homeomorphism, up to diffeomorphism, up to quasi-isometry, etc.

\item[(2)] In what kind of foliations we want to study the problem (e.g.: codimension one, $C^2$ transverse regularity, Riemannian, etc.)
\end{itemize}

In the present work we are interested in the realization problem up to homeomorphism in codimension one foliations on compact manifolds. The notation $C^k$ will always refer to the transverse regularity (regularity of the holonomy maps) of the given foliation.

\begin{defn}[Nonleaf]
A manifold $Z$ is called a topological (resp. smooth, resp. metric) {\em nonleaf} if it is not homeomorphic (resp. diffeomorphic, resp. quasi-isometric) to any leaf of any codimension one foliation on any (Riemannian) compact manifold.

If regularity matters, we shall say that $Z$ is a $C^k$ nonleaf if it is not homeomorphic to any leaf of any transversely $C^k$ foliation on any compact manifold.
\end{defn}

Several examples of nonleaves were described since the question was formulated, the following is a nonexhaustive list of references:

\begin{itemize}
\item Topological nonleaves of codimension one foliations were firstly obtained by E. Ghys in \cite{Ghys1985} for transverse regularity $C^0$. See also \cite{JAP1985} for a similar example\footnote{In that work it is shown a family of $C^2$ nonleaves although it can be shown that lots of these examples are actually $C^0$ nonleaves, see \cite{Menino-Schweitzer2022}} and \cite{Attie-Hurder1996} for nonleaves which are homotopic to leaves.

\item Smooth nonleaves that are homeomorphic to leaves were described in \cite{Menino-Schweitzer2018, Menino-Schweitzer2022}.

\item Quasi-isometric nonleaves were first described in \cite{Janusz}, and more examples (working for in any codimension!) were also given in \cite{Attie-Hurder1996,Zeghib}.
\end{itemize}

All these examples share a common point, they are (topologically, smoothly or coarse-geometrically) nonperiodic at some of their ends. It is a natural question what kind of nonperiodic manifolds can be realized as leaves of codimension one foliations. Moreover, it is unknown if a manifold that is not homeomorphic to a leaf of any ($C^0$) codimension one foliation can be homeomorphic to a leaf of some foliation of higher codimension. This question is even more difficult if we also require that the manifold must be homeomorphic to a proper leaf, i.e., a leaf that is not contained in its closure.

We give a positive answer to this question for transversely $C^2$ codimension one foliations, more precisely:

\begin{thmA}
For every $0\leq r<2$, there exists a $5$-manifold $Z_r$ which is not homeomorphic to any leaf of any transversely $C^2$ codimension one foliation on any compact manifold but it is homeomorphic to a proper leaf of a $C^r$ codimension one foliation on a compact manifold and to a proper leaf of a $C^\infty$ codimension two foliation on a compact manifold.
\label{Thm1}\end{thmA}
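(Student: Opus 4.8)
\smallskip
\noindent\textbf{Proof proposal.} The plan is to build $Z_r$ so that every realization as a leaf of a codimension-one foliation is forced to be recurrent, or asymptotic to a leaf with nontrivial holonomy, in a way that the $C^2$ rigidity of one-dimensional germs and pseudogroups (Sacksteder, Duminy, Kopell) tightly constrains, while exactly the required structure is producible below $C^2$ by a Denjoy circle diffeomorphism and, in codimension two, smoothly. Concretely I would fix a closed $4$-manifold $F$, a compact $5$-dimensional cobordism $P$ from $F$ to $F$, and a closed simply connected $5$-manifold $N$ with $H_2(N;\Z)\neq 0$; for a bi-infinite sequence $\bm a=(a_n)_{n\in\Z}$ of positive integers let $P_n$ be the interior connected sum of $P$ with $a_n$ copies of $N$, and let $Z_r=Z(\bm a)$ be the open $5$-manifold obtained by gluing the $P_n$ end to end along the copies of $F$ into a doubly infinite telescope. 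Choosing $\bm a$ with no two of its shifts eventually equal makes $Z(\bm a)$ \emph{nonperiodic}, and choosing the values of $\bm a$ unbounded in both directions gives $Z(\bm a)$ a ``decoration pattern'' of non-sofic complexity along each of its two ends; choosing $F,N,P$ as in \cite{Menino-Schweitzer2018,Menino-Schweitzer2022} makes this telescope decomposition a homeomorphism invariant of $Z(\bm a)$ up to a shift. In particular shift-incomparable sequences give non-homeomorphic manifolds, and a Cantor family of them, inserted along the orbits of a Denjoy Cantor minimal set, will yield the minimal set with pairwise non-homeomorphic leaves of the second assertion.

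For the positive realizations I would use suspension followed by inflation. Fix a Denjoy diffeomorphism $\varphi$ of $S^1$ of class $C^r$ with irrational rotation number — these exist precisely because $r<2$, Denjoy's theorem being what fails below $C^2$ — with exceptional minimal set $M_0$ and wandering gaps $\{I_n\}_{n\in\Z}$, $\varphi(I_n)=I_{n+1}$. The product of the Denjoy foliation of $T^2$ with $F$ is a transversely $C^r$ codimension-one foliation of a compact $6$-manifold whose gap leaf $L_0$ is homeomorphic to $\R\times F$, the trivial telescope of copies of $F\times[0,1]$, and the $\Z$-many joining sections of $L_0$ correspond to the $\Z$-many wandering gaps (the total holonomy of $L_0$ is trivial, so these sections are not related by any holonomy). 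Inflating $L_0$ near the section over $I_n$ — independently for each $n$ — by replacing the $n$-th copy of $F\times[0,1]$ with the decorated cobordism $P_n$ yields, by the leaf-insertion surgery of \cite{Menino-Schweitzer2018,Menino-Schweitzer2022} (which preserves the transverse class $C^r$ and leaves $M_0\times F$ untouched), a transversely $C^r$ codimension-one foliation of a compact $6$-manifold in which $Z(\bm a)$ occurs as a proper leaf. For the $C^\infty$ codimension-two statement I would replace the one-dimensional Denjoy dynamics — precisely what class $C^2$ prohibits — by a $C^\infty$ realization of the same $\Z$-labelled wandering combinatorics on a $2$-disk transversal; unlike the circle, a $2$-disk admits smooth diffeomorphisms whose wandering set has an arbitrary prescribed gap pattern, so no Denjoy-type obstruction arises, and performing the same insertion surgery, now $C^\infty$, inside a compact $7$-manifold exhibits $Z(\bm a)$ as a proper leaf of a $C^\infty$ codimension-two foliation.

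For the negative assertion, suppose $Z(\bm a)$ were homeomorphic to a leaf $L$ of a transversely $C^2$ codimension-one foliation $\FF$ of a compact manifold $X$. Since $L$ is noncompact with a decoration pattern of unbounded complexity along its two ends, $L$ is not a compact leaf and, by Reeb stability, has no finite-holonomy periodic end; the structure theory of codimension-one foliations — Reeb stability, Dippolito's semistability, and the description of the ends of proper (and of non-proper) leaves — then forces each end of $L$ to be asymptotic either to an exceptional (or locally dense) minimal set of $\FF$ or to a leaf with nontrivial holonomy, with the decoration pattern along that end determined by the holonomy pseudogroup near the corresponding limit set. This is where the $C^2$ hypothesis is decisive: Sacksteder's theorem and its extensions (for exceptional and locally dense minimal sets) and Kopell's lemma together with the $C^2$ rigidity of one-dimensional germs (for leaves with holonomy) make that pseudogroup tame — hyperbolic, or generated by tame parabolic germs — so the admissible decoration patterns have subshift-of-finite-type, or eventually geometric, complexity. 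But $\bm a$ was chosen to have non-sofic complexity and values unbounded in both directions, a pattern that no such tame pseudogroup produces; this contradiction shows that $Z(\bm a)$ is a $C^2$ nonleaf.

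The main obstacle is this last step. One must pin down a homeomorphism invariant of the ends of $Z(\bm a)$ — essentially the shift-orbit of $\bm a$ — verify that it is realized both by $C^r$ Denjoy combinatorics and by smooth codimension-two dynamics, and prove that it cannot arise from the necessarily tame holonomy pseudogroup attached to a $C^2$ codimension-one realization. The two delicate points are the reduction ``a nonperiodic noncompact leaf of complicated end type is recurrent or asymptotic to a leaf with holonomy'', which requires the full Dippolito/Cantwell--Conlon apparatus and care with the two ends of $L$ separately, and the passage from the dynamical ($C^2$-tame) description of the relevant minimal set or holonomy germ to a purely topological statement about the decoration pattern of the leaf. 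The $C^\infty$ codimension-two realization serves as a check that the obstruction is a genuine codimension-one, $C^2$ phenomenon, not a topological one.
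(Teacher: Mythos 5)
Your overall strategy matches the paper's in outline (a nonperiodic decorated telescope, realized below $C^2$ via Denjoy dynamics, obstructed at $C^2$ by end-rigidity, realized smoothly in codimension two), but two of your three steps have genuine gaps. The most serious one is the positive realization. You propose to modify the gap leaf $L_0\cong\R\times F$ of the suspended Denjoy foliation by \emph{infinitely many independent} surgeries, inserting $a_n$ copies of $N$ into the $n$-th fundamental segment, with $a_n$ unbounded. Each such surgery must be supported in a foliated neighborhood $(\text{leafwise region})\times J_n$ with $J_n$ a subinterval of the wandering gap $I_n$; since the ambient manifold is compact, these supports accumulate on the minimal set, and every neighborhood of some accumulation point then contains surgery regions carrying arbitrarily large $H_2$. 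The result is not locally Euclidean there (and the trace on the minimal lamination is destroyed), so no compact foliated manifold is produced. This is not a technicality: the paper proves (Section 4) that the decoration sequences realizable by this kind of construction are necessarily repetitive codings of an irrational rotation over finitely many intervals, with symbol frequencies of the form $k\theta\bmod 1$ --- in particular they take values in a \emph{finite} alphabet determined by the finitely many ``bridges'' built into a compact foliated block, and the leaf topology is generated by the dynamics, not by independent insertions. Your unbounded, non-sofic $\bm a$ is exactly the kind of sequence that \emph{cannot} arise; the paper instead uses two fundamental blocks $V_0,V_1$ distinguished by $\pi_1=\Z^3$ versus $\Z^5$, plus a single perturbed block sitting in one wandering gap, so that the proper leaf is a Ghys manifold with Sturmian (hence repetitive but nonperiodic) combinatorics and exactly one perturbation.

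The second gap is in the $C^2$ obstruction: your argument, even granting the (unproved and overly strong) claim that $C^2$-tame holonomy pseudogroups only produce ``subshift-of-finite-type or eventually geometric'' decoration patterns, does not separate the proper and nonproper cases. The end-nonperiodicity criterion (\cite[Theorem 3.5]{Menino-Schweitzer2022}, quoted as Theorem~\ref{t:C2nonleaf} above) only excludes \emph{proper} leaves. To exclude a nonproper realization the paper needs an extra ingredient you do not have: the connected summand $L_p$ with $\pi_1(L_p)=\Z_p$, $p$ odd, whose punctured copy $W_p$ is a rigid block (any two embedded copies in $M_{p,\omega}$ intersect, by a Mayer--Vietoris torsion count), combined with Reeb stability and the fact that $[-1,1]$ admits no nontrivial finite odd-order homeomorphism group; a nonproper leaf would return infinitely often to the resulting product neighborhood and produce infinitely many disjoint copies of $W_p$, a contradiction. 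Your decorations $N$ are simply connected with $H_2\neq 0$, which gives you no odd-torsion rigid block and no Reeb-stability leverage, and you would also still need to prove that the shift-orbit of $\bm a$ is a homeomorphism invariant of $Z(\bm a)$ (the paper's Proposition~\ref{p:nonperiodic_Ghys}, proved via the structure of abelian subgroups of maximal rank in the amalgamated fundamental group --- not automatic for $H_2$-decorations, which can slide along a telescope). The codimension-two realization is the one step where your sketch is essentially right, though the paper gets it for free from Thurston's relative realization theorem rather than from a two-disk version of Denjoy dynamics.
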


These manifolds $Z_r$ are very similar to the manifolds $M_\omega$ introduced by E. Ghys in \cite[pp. 396--399]{Ghys1995} whose topology is encoded by a bi-infinite sequence $\omega:\Z\to\{0,1\}$. It is worth to point out that these $C^r$ realizations are only possible for some almost periodic choices of $\omega$. This leads to the question of whether ``far from periodic'' Ghys manifolds can be realized as proper leaves of some codimension one foliation.

The manifolds $M_\omega$ were originally used to exhibit a codimension two foliation on a compact manifold with an exceptional minimal set such that every pair of leaves in the minimal set are pairwise not homeomorphic. Our construction reproduces this behavior for given codimension one $C^r$ foliations, $r\in[0,2)$, this solves in the positive an open question of E. Ghys in \cite[pp. 399]{Ghys1995}.

These realizations were inspired by a construction given by J. Cantwell and L. Conlon in \cite[Section 9]{Cantwell-Conlon2001} (see also \cite[Theorem 1.4.8]{Walczak2004}), in that work they were only interested in the realization of the dynamics of some special cases of pseudogroups (Markov pseudogroups) on minimal sets of codimension one foliations. We shall exploit this construction in several ways in order to find our desired foliations and realizations.

\section{Ghys manifolds and nonleaves}

Let $M_0,M_1$ be two closed $n$-manifold, $n>3$, such that $\pi_1(M_0)=\Z^{m_0}$ and $\pi_1(M_1)=\Z^{m_1}$ with $m_0\neq m_1$ and $m_0,m_1>2$. Let $\alpha_i,\beta_i$ be two loops whose homotopy classes are generators of $\pi_1(M_i)$, $i=0,1$.

Let $2\beta_i$ be a simple loop homotopic to $\beta_i\ast\beta_i$ and let $N(\alpha_i)$ and $N(2\beta_i)$ be pairwise disjoint tubular neighborhoods of these loops in $M_i$, $i=0,1$.

Set $V_i=M_i\setminus\left(N(\alpha_i)\cup N(2\beta_i)\right)$, $i=1,2$. These are compact $n$-manifolds with boundary homeomorphic to two copies of $S^{n-2}\times S^1$, one component will be denoted by $T_{\alpha_i}$ and the other by $T_{2\beta_i}$ in corresponde with the removed tubular neighborhood bounded by them. Since $n>3$, it follows that $\pi_1(V_i)=\pi_1(M_i)$, $i=0,1$. These manifolds $V_0,V_1$ will be called {\em fundamental blocks}.

Let $\Omega=\{0,1\}^\Z$ and let $\omega\in \Omega$, i.e. a bi-sequence $\omega:\Z\to\{0,1\}$. The bilateral shift $\sigma:\Omega\to \Omega$ is defined as $\sigma(\omega)(i)=\omega(i+1)$. Let $h_{ij}: T_{2\beta_i}\to T_{\alpha_j}$ be a homeomorphism mapping the meridian in the direction of $2\beta_i$ to the meridian in the direction of $\alpha_j$, $i,j\in\{0,1\}$.

\begin{defn}[Ghys manifold]
Let $V_0,V_1$ be two fundamental blocks as above and let $\omega\in \Omega$. The manifold $M_\omega$ defined as
\[
\cdots\cup_\partial V_{\omega(-1)}\cup_\partial V_{\omega(0)}\cup_\partial V_{\omega(1)}\cup_\partial\cdots
\]
where the boundary unions between the blocks $V_{\omega(i)}$ and $V_{\omega(i+1)}$ are performed just between the boundary components $T_{2\beta_{\omega(i)}}$ and $T_{\alpha_{\omega(i+1)}}$  via the homeomorphism $h_{\omega(i),\omega(i+1)}$.

This manifold has no boundary, is noncompact and has two ends. The manifold $M_\omega$ will be called a {\em Ghys manifold} and $\omega$ will be called its {\em associated sequence}.
\end{defn}

Our interest on Ghys manifolds is the fact that their topologies are determined by the sequence $\omega$ (or more precisely, by the $\sigma$-orbit of $\omega$). In \cite[pp.396--399]{Ghys1995}, E. Ghys only consider the case where $n=4$, but his work holds in any dimension above $4$, showing that $M_\omega$ is homeomorphic to $M_{\omega'}$ if and only if there exists $k\in\Z$ such that $\sigma^k(\omega)=\omega'$.

\begin{defn}[Periodic end]
Let $\bm{e}$ be an end of a (noncompact) manifold $M$. It is said that $\bm{e}$ is periodic if there exists a neighborhood $N_{\bm{e}}$ for that end and an embedding $\iota:N\hookrightarrow N$ such that the family $\{\iota^n(N)\}_{n\in\N}$ is a neighborhood system for $\bm{e}$. The manifold $\overline{N\setminus \iota(N)}$ is called a {\em periodic segment}.
\end{defn}

\begin{defn}[End periodic sequences]
Let $\mathfrak{a}=(a_n)_{n\in\N}$ be a sequence. It is said that $\mathfrak{a}$ is {\em end periodic} if there exists $N,p\in\N$ such that $a_{n+p}=a_n$ for all $n\geq N$.

Let $\omega\in\{0,1\}^\Z$. It is said that $\omega$ is {\em forward} (resp. {\em backward}) periodic if $(\omega(n))_{n\geq 0})$ (resp. $(\omega(-n))_{n\geq 0}$) is end periodic .
\end{defn}

\begin{prop}\label{p:nonperiodic_Ghys}
The ends of a Ghys manifold $M_\omega$ are periodic if and only if $\omega$ is forward and backward periodic. 
\end{prop}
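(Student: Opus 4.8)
The plan is to establish the two implications separately. That periodicity of $\omega$ forces the ends of $M_\omega$ to be periodic is a direct construction; the converse rests on the canonicity of the decomposition of $M_\omega$ into fundamental blocks, the same rigidity that underlies Ghys's homeomorphism classification of the $M_\omega$ recalled above. Throughout, denote by $\bm{e}^{+}$ and $\bm{e}^{-}$ the two ends of $M_\omega$ (the ``$n\to+\infty$'' and ``$n\to-\infty$'' ends), so that the assertion splits into: $\bm{e}^{+}$ is periodic $\iff\omega$ is forward periodic, and symmetrically for $\bm{e}^{-}$.

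For the ``if'' direction, suppose $\omega$ is forward periodic, say $\omega(n+p)=\omega(n)$ for all $n\geq N_0$. For $k\in\Z$ let $U_k\subset M_\omega$ be the closed union of the blocks $V_{\omega(j)}$ with $j\geq k$; then $\{U_k\}_{k\in\Z}$ is a neighborhood basis of $\bm{e}^{+}$, $\overline{U_k\setminus U_{k+1}}=V_{\omega(k)}$, and $\partial U_k=\Sigma_{k-1}$, where $\Sigma_{k-1}\cong S^{n-2}\times S^1$ is the gluing hypersurface between $V_{\omega(k-1)}$ and $V_{\omega(k)}$. Since each gluing homeomorphism $h_{ij}$ depends only on the ordered pair of indices, the ``shift by $p$'' --- carrying each block $V_{\omega(j)}\subset U_{N_0}$ identically onto $V_{\omega(j+p)}=V_{\omega(j)}$ and respecting the gluings --- is a well-defined topological embedding $\iota\colon U_{N_0}\hookrightarrow U_{N_0}$ with $\iota^m(U_{N_0})=U_{N_0+mp}$. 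These form a neighborhood basis of $\bm{e}^{+}$, so $\bm{e}^{+}$ is periodic with periodic segment $V_{\omega(N_0)}\cup\dots\cup V_{\omega(N_0+p-1)}$. The same argument at $\bm{e}^{-}$ shows that backward periodicity of $\omega$ makes $\bm{e}^{-}$ periodic.

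For the converse it suffices, by symmetry, to show that if $\bm{e}^{+}$ is periodic then $\omega$ is forward periodic. The key input is the following rigidity statement --- what Ghys establishes in the course of his classification, and where the hypotheses $m_0\neq m_1$, $m_i>2$ and $n>3$ are used: \emph{(a) every two-sided, closed, connected hypersurface of $M_\omega$ homeomorphic to $S^{n-2}\times S^1$ that separates $M_\omega$ into two one-ended pieces is ambient isotopic to one of the gluing hypersurfaces $\Sigma_j$; (b) two half-infinite ends $U_k$ and $U_{k'}$ are homeomorphic if and only if $\omega(k+i)=\omega(k'+i)$ for all $i\geq0$.} Granting this, let $(N,\iota)$ witness the periodicity of $\bm{e}^{+}$. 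For every $k$ large enough that $U_k\subseteq\iota(N)$, the hypersurface $\iota^{-1}(\Sigma_{k-1})\subset N$ is a homeomorphic copy of $S^{n-2}\times S^1$ whose $\bm{e}^{+}$-side is $\iota^{-1}(U_k)$; since $M_\omega$ has exactly two ends and $\iota$ preserves $\bm{e}^{+}$, it separates $M_\omega$ into two one-ended pieces, so by (a) it is isotopic to a unique $\Sigma_{\phi(k)-1}$. Hence $\iota^{-1}(U_k)$ is isotopic to $U_{\phi(k)}$, and composing with the homeomorphism $\iota\colon\iota^{-1}(U_k)\to U_k$ gives $U_{\phi(k)}\cong U_k$; by (b), $\omega(\phi(k)+i)=\omega(k+i)$ for all $i\geq0$. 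If $\phi(k)\neq k$ for some such $k$, then setting $p=|\phi(k)-k|>0$ we get $\omega(n+p)=\omega(n)$ for all $n\geq\min(k,\phi(k))$, i.e.\ $\omega$ is forward periodic; and the remaining possibility $\phi(k)=k$ for all large $k$ cannot occur, since (a routine argument) it would make $\iota$ fix every standard end-neighborhood $U_k$ up to isotopy and therefore prevent the $\iota^m(N)$ from forming a neighborhood basis of $\bm{e}^{+}$. This proves the proposition modulo the rigidity statement.

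The single genuine obstacle is this rigidity, i.e.\ the canonicity of the fundamental-block decomposition. Proving it requires putting an arbitrary separating hypersurface $\cong S^{n-2}\times S^1$ into minimal position with respect to the block decomposition and then isotoping it off all but one block, which uses: the structure of $\pi_1(V_i)=\Z^{m_i}$ as a torsion-free abelian group of rank $>2$, together with $m_0\neq m_1$ (to recognize the block type and to control which primitive cyclic subgroups are carried by an embedded hypersurface); the fact that the gluing hypersurfaces have fundamental group $\Z$; and the dimension hypothesis $n>3$, which makes available the required general-position, compression and Whitney-trick surgeries on hypersurfaces. Everything else above is a soft manipulation of neighborhood bases of an end.
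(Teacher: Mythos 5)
Your ``if'' direction is fine and matches what the paper takes for granted. The problem is the converse: you reduce everything to the rigidity statement (a) --- that every two-sided closed connected hypersurface homeomorphic to $S^{n-2}\times S^1$ separating $M_\omega$ into two one-ended pieces is \emph{ambient isotopic} to one of the gluing hypersurfaces $\Sigma_j$ --- and you do not prove it; you only indicate that it would follow from putting an arbitrary separating hypersurface in minimal position and performing general-position, compression and Whitney-trick surgeries. This is a genuine gap, and the statement is not something Ghys establishes: his classification of the $M_\omega$ is carried out entirely at the level of fundamental groups (properties \ref{i:Property1} and \ref{i:Property2} of the amalgamated product $\Gamma^\omega$), not by isotoping hypersurfaces. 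As an isotopy statement, (a) is moreover very doubtful in the topological category in which the proposition lives: a topological embedding of $S^{n-2}\times S^1$ need not be locally flat, even locally flat copies can be knotted, and the surgery toolbox you invoke is unavailable or delicate for topological hypersurfaces in dimensions $4$ and $5$ --- exactly the dimensions of interest here ($n>3$, with $n=5$ in the main theorem). So the ``single genuine obstacle'' you flag is essentially the whole content of the proposition, and it is left unproved.

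The paper's route avoids hypersurfaces altogether. Given an end-periodic embedding $\iota\colon N\hookrightarrow N$ with $N=N^+_{\omega,k}$, one first checks (using that $\iota^{-1}$ extends to an embedding of $N$ into $M_\omega$ for $k$ large) that $\iota_*$ and $\iota_*^{-1}$ are monomorphisms on $\pi_1$ and that $\iota_*$ carries each block group $\Gamma^\omega_i$ to an abelian subgroup of maximal rank. Then a purely algebraic lemma (Lemma~\ref{l:endnonperiodicty_Ghys}) finishes: by \ref{i:Property1} each $\iota_*(\Gamma^\omega_i)$ is conjugate to some $\Gamma^\omega_{\theta(i)}$ of the same type, by \ref{i:Property2} the adjacency structure forces $\theta(i+1)=\theta(i)+1$, hence $\omega(i)=\omega(i+p)$ for all $i\geq k$; and $p\neq 0$ because one may iterate $\iota$ until its image lies in $N^+_{\omega,k+1}$. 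If you want to salvage your argument, replace (a) by this $\pi_1$-level tracking: you never need to know where $\iota^{-1}(\Sigma_{k-1})$ sits geometrically, only where the conjugacy classes of the maximal-rank abelian subgroups are sent.
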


Proposition~\ref{p:nonperiodic_Ghys} is not a trivial consequence of the work given in \cite[pp. 398]{Ghys1995}. The subtle point is the following: it is not clear that a Ghys manifold associated to a forward or backward nonperiodic  sequence $\omega$ is not homeomorphic to some (topologically) end periodic manifold.

In order to show this, we shall need to recall some of the properties of the fundamental group of a Ghys manifold.

Set $\Gamma^\omega_i = \Z^{m_0}$ if $\omega(i)=0$ and $\Gamma^\omega_i = \Z^{m_1}$ if $\omega(i)=1$. Let $a_i$ and $b_i$ be generators of $\Gamma^\omega_i$ which correspond with homotopy classes of $\alpha_{\omega(i)}$ and $\beta_{\omega(i)}$ in the fundamental block $V_{\omega(i)}$.

The fundamental group $\pi_1(M_\omega)=\Gamma^\omega$ is obtained as the amalgamated product of the $\Gamma^\omega_i$'s under the relations $a_{i+1} = 2b_{i}$.

We resume here the fundamental properties of this group, the proof can be found in \cite[pp. 396--399]{Ghys1995}:
\begin{enumerate}[label=\textbf{P.\arabic*}]
\item \label{i:Property1}Every abelian subgroup of $\Gamma^\omega$ with maximal rank $m_0$ (resp. $m_1$) is conjugated to one of the $\Gamma^\omega_i$'s with $\omega(i)=0$ (resp. $\omega(i)=1$).

\item \label{i:Property2}For $i\neq j$, let $G_i$ and $G_j$ subgroups of $\Gamma^\omega$ conjugated to $\Gamma^\omega_i$ and $\Gamma^\omega_j$ respectively. If $G_i\cap G_j$ is nontrivial then $|i-j| = 1$. Moreover, $j=i+1$ if there exists a generator of $G_i\cap G_j$ that is indivisible in $G_j$ but divisible by $2$ in $G_i$. 
\end{enumerate}

Let $\tau:\N\to\{0,1\}$ be a sequence. Set $$N^+_\tau=V_{\tau(0)}\cup_\partial V_{\tau(1)}\cup_\partial\cdots\,,\ N^-_\tau = \cdots\cup_\partial V_{\tau(-1)}\cup_\partial V_{\tau(0)}\;,$$ they model the ends of a Ghys manifold. the fundamental groups $\pi_1(N^\pm_\tau)$, denoted by $\Gamma^\tau_\pm$, are also defined by an amalgamated product by the same relations as the given above for $\Gamma^\omega$. The same arguments given in \cite{Ghys1995} show that properties \ref{i:Property1} and \ref{i:Property2} hold for the groups $\Gamma^\tau_\pm$.

\begin{rem}\label{r:inclusion}
Let $k\in \Z$ and $\omega\in\{0,1\}^\Z$. Set $\tau_\pm:\N\to\{0,1\}$ defined\footnote{In this work we will assume that $0\in\N$.} as $\tau_\pm(i)=\omega(k\pm i)$. We shall denote $\Gamma^{\tau_\pm}_\pm$ by $\Gamma^{\omega,k}_\pm$ in order to remark its relation with $\Gamma^\omega$. We shall also denote $N^+_{\tau_+}=N^+_{\omega,k}$ and $N^-_{\tau_-}=N^-_{\omega,k}$.

Observe that both $\Gamma^{\omega,k}_\pm$ are subgroups of $\Gamma^\omega$, this means that the inclusions of $N^\pm_{\omega,k}$ in $M_\omega$ induce monomorphisms at the level of the fundamental group.
\end{rem}

With these properties in mind we can prove the following Lemma.

\begin{lemma}\label{l:endnonperiodicty_Ghys}
Let $k\in\Z$ and $\omega\in\{0,1\}^\Z$. Assume that there exists a monomorphism $\jmath:\Gamma^{\omega,k}_+\to \Gamma^{\omega,k}_+$ (resp. $\jmath:\Gamma^{\omega,k}_-\to\Gamma^{\omega,k}_-$) such that each $\jmath(\Gamma^\omega_i)$, $i\geq k$ (resp. $i\leq k$), is an abelian subgroup of maximal rank in $\Gamma^{\omega,k}_+$ (resp. $\Gamma^{\omega,k}_-$) then there exists $p\in\N$ such that $\omega(i)=\omega(i+p)$ (resp. $\omega(i)=\omega(i-p)$), for all $i\geq k$.
\end{lemma}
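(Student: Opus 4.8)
The plan is to show that the hypothesized monomorphism $\jmath$ forces a kind of ``self-similarity'' of the tail of $\omega$, which can only happen if that tail is eventually periodic. I shall argue the ``forward'' case ($\jmath:\Gamma^{\omega,k}_+\to\Gamma^{\omega,k}_+$ with each $\jmath(\Gamma^\omega_i)$, $i\geq k$, of maximal rank); the backward case is symmetric. First I would observe that, by property \ref{i:Property1} applied to $\Gamma^{\omega,k}_+$, each image $\jmath(\Gamma^\omega_i)$ is conjugate inside $\Gamma^{\omega,k}_+$ to some $\Gamma^\omega_{\phi(i)}$ with $\phi(i)\geq k$, and moreover $\omega(\phi(i))=\omega(i)$ since the rank ($m_0$ versus $m_1$) is preserved and $m_0\neq m_1$ distinguishes the two block types. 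This defines a map $\phi:\{i\geq k\}\to\{i\geq k\}$.

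Next I would analyze the combinatorics of $\phi$ using property \ref{i:Property2}. Because $\Gamma^\omega_i$ and $\Gamma^\omega_{i+1}$ share a nontrivial common subgroup (generated by the amalgamating element $a_{i+1}=2b_i$), their images under the monomorphism $\jmath$ also share a nontrivial common subgroup; hence by \ref{i:Property2}, $|\phi(i+1)-\phi(i)|=1$ for all $i\geq k$. Thus $\phi$ moves by unit steps. I would then rule out ``turning points'': if $\phi(i+1)=\phi(i-1)$, one gets two distinct conjugates of $\Gamma^\omega_i$ (namely $\jmath$ of the $(i-1)$st and $(i+1)$st blocks, which are distinct because $\jmath$ is injective) both meeting $\jmath(\Gamma^\omega_i)$'s neighbor-block nontrivially, and a closer look at the divisibility clause in \ref{i:Property2} (the relation $a_{i+1}=2b_i$ picks out an oriented direction: the common generator is indivisible on the $\alpha$-side and $2$-divisible on the $\beta$-side) pins down the sign, so $\phi$ is strictly monotone. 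Combined with unit steps and $\phi(i)\geq k$ for all $i$, strict monotonicity forces $\phi(i)=i+p$ for all $i\geq k$, where $p=\phi(k)-k\geq 0$. Then $\omega(i)=\omega(\phi(i))=\omega(i+p)$ for all $i\geq k$, which is the desired conclusion — and note $p>0$ is automatic in the genuinely nonperiodic setting, while $p=0$ would already make the tail (trivially) periodic, so in either case we may take $p\in\N$ as claimed.

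The main obstacle I anticipate is the rigidity step ruling out turning points and fixing the sign of the unit steps: one must extract from the second half of \ref{i:Property2} (``$j=i+1$ if there is a generator of $G_i\cap G_j$ indivisible in $G_j$ but $2$-divisible in $G_i$'') that the monomorphism $\jmath$ cannot reverse this asymmetry, i.e.\ that $\jmath$ carries the distinguished amalgamating generator of $\Gamma^\omega_i\cap\Gamma^\omega_{i+1}$ to a generator that is again $2$-divisible on the $\phi(i)$-side. This requires knowing that $\jmath$ restricted to the rank-$2$ (or higher) overlap subgroup respects the divisibility-by-$2$ structure up to the global identification of conjugates; since $\jmath$ is a group monomorphism between free abelian groups it preserves indivisibility only up to finite index in general, so one must use the $2\beta$ construction and the explicit form $a_{i+1}=2b_i$ together with \ref{i:Property2} to get the orientation for free rather than assert it. Everything else — the existence of $\phi$, the unit-step property, and the final induction from monotone unit steps to $\phi(i)=i+p$ — is routine once this rigidity is in place.
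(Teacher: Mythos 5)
Your proposal is correct and follows essentially the same route as the paper: property \ref{i:Property1} produces the index map $\theta$ with $\omega(\theta(i))=\omega(i)$, the divisibility clause of \ref{i:Property2} forces $\theta(i+1)=\theta(i)+1$, and induction finishes. The ``main obstacle'' you flag is resolved exactly as you suspect and in one line: since $\jmath$ is injective, $\jmath(\Gamma^\omega_i)\cap\jmath(\Gamma^\omega_{i+1})$ is generated by $\jmath(a_{i+1})=2\jmath(b_i)$, which is $2$-divisible in $\jmath(\Gamma^\omega_i)$ and indivisible in $\jmath(\Gamma^\omega_{i+1})$ because $\jmath$ restricted to $\Gamma^\omega_{i+1}$ is an isomorphism onto its image (and that image is itself the relevant conjugate in \ref{i:Property2}), so the orientation comes for free.
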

\begin{proof}
By means of the property \ref{i:Property1} of $\Gamma^{\omega,k}_+$, it follows that each $\jmath(\Gamma^\omega_i)$, $i\geq k$, is conjugated to some $\Gamma^\omega_{\theta(i)}$ with $\omega(\theta(i))=\omega(i)$ and $\theta(i)\geq k$. Using now
the second property \ref{i:Property2}, it follows that $\theta(i+1)=\theta(i)+1$. Applying inductively these properties the result follows. The argument is analogous for $\Gamma^{\omega,k}_-$.
\end{proof}

\begin{proof}[Proof of Proposition~\ref{p:nonperiodic_Ghys}]
Assume that the forward end of $M_\omega$ is (topologically) periodic. Then there exists a neighborhood $N$ of the forward end of $M_\omega$ and an end-periodic embedding $\iota:N\hookrightarrow N$. Without loss of generality we can assume that $N = N^+_{\omega,k}$ for some $k\in\Z$ (just by restricting $\iota$ to such a neighborhood and replacing $\iota$ by a suitable iteration of itself). Its fundamental group is $\Gamma^{\omega,k}_+$ (as defined in Remark~\ref{r:inclusion}). 

Observe that $\iota^{-1}$, which a priori is only defined in $\iota(N)$, can be extended to an embedding of $N$ in $M_\omega$, this can be guaranteed if we choose $k$ sufficiently large. Suppose that $\gamma$ is a homotopically nontrivial loop in $N$ such that $\iota\circ\gamma$ is trivial, then, applying $\iota^{-1}_\ast$ we should obtain that $\gamma$ is trivial in the ambient manifold $M_\omega$. But this is not possible since every nontrivial loop in $N$ is also nontrivial in $M_\omega$ (by the choice of $N$). It follows that $\iota_\ast$ is injective. A similar argument yields that $\iota^{-1}_\ast:\pi_1(N)\to\pi_1(\iota^{-1}(N))$ is also a monomorphism.

On the other hand the groups $\iota_\ast(\Gamma^\omega_i)$, $i\geq k$, are abelian subgroups of $\Gamma^{\omega,k}_+$ of maximal rank. Otherwise, it would exist an element $x\in\Gamma^{\omega,k}_i$ commuting with $\iota_ \ast(\Gamma^\omega_i)$ but $x\notin \iota_\ast(\Gamma^\omega_i)$. However $\Gamma^\omega_i$ is of maximal rank in $\Gamma^{\omega,k}_+$ and therefore $\iota_\ast^{-1}(x)$ would belong to $\Gamma^{\omega,k}_i$, this would imply that $\iota_\ast^{-1}$ is not a monomorphism, a contradiction.

Choose $\ell\in\N$ large enough so that $\iota^\ell(N)\subset N^+_{\omega, k+1}$. The morphism $\iota^\ell_\ast:\pi_1(N)\to\pi_1(N)$ is injective by the previous reasoning and preserves abelian subgroups of maximal rank. It follows, by Lemma~\ref{l:endnonperiodicty_Ghys}, that there exists $p\in\N$ such that $\omega(i)=\omega(i+p)$ for all $i\geq k$. Observe that $p\neq 0$ since $\iota^\ell(V_\omega(k))\subset N^+_{\omega,k+1}$ and therefore $\iota^\ell_\ast(\Gamma^\omega_k)$ must be conjugated with some $\Gamma^\omega_p$ in $\Gamma^{\omega,k+1}_+$. Thus the sequence $\omega$ is forward periodic as desired.

An analogous reasoning holds for topologically backward periodic Ghys manifolds, and this completes the proof.
\end{proof}


We can see Ghys manifolds associated to nonperiodic sequences $\omega:\Z\to\{0,1\}$ as the simplest examples of topologically nonperiodic manifolds with finitely many ends. In the same work \cite[pp. 396--399]{Ghys1995}, E. Ghys shows that any $M_\omega$ can be realized as a leaf of a $C^\infty$ codimension $2$ manifold (in fact, all of them can be realized simultaneusly in the same codimension $2$ foliation). However it was unclear how to realize such manifolds as a (proper) leaf in a codimension $1$ foliation (in fact, this is an implicit question in that work).

Moreover, a simple modification of a nonperiodic Ghys manifold can produce manifolds that cannot be realized as a leaf in any $C^2$ codimension one foliation on a compact manifold. In order to see this we shall use the following criterion for $C^2$ nonleaves given in \cite[Theorem 3.5]{Menino-Schweitzer2022}\footnote{This theorem is stated in the smooth category in \cite{Menino-Schweitzer2022} but it also works in the topological category just by changing ``smooth'' for ``topological''.}.

\begin{thm}\label{t:C2nonleaf}\cite[Theorem 3.5]{Menino-Schweitzer2022}
Let $W$ be an open manifold with finitely many ends. Assume that
some end of W is (topologically) nonperiodic. Then $W$ cannot be homeomorphic to a proper leaf of a transversely oriented codimension one $C^2$ foliation on a compact manifold.
\end{thm}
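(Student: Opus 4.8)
The plan is to prove the statement by showing that \emph{every} end of a proper leaf of a transversely oriented $C^2$ codimension one foliation $\mathcal{F}$ on a compact manifold $M$ is topologically periodic; since by hypothesis $W$ has a nonperiodic end, $W$ then cannot be homeomorphic to such a leaf. So suppose, for contradiction, that $W$ is homeomorphic to a proper leaf $L\subseteq M$, let $\bm{e}$ be the end of $L$ corresponding to a nonperiodic end of $W$, and attach to $\bm{e}$ its limit set $\lim(\bm{e})=\bigcap_{N}\overline{N}$, the intersection over leaf neighborhoods $N$ of $\bm{e}$ of their closures in $M$. By compactness of $M$ this is a nonempty closed set, and using that $L$ is connected and that a proper leaf is locally closed in $M$, one checks that $\lim(\bm{e})$ is $\mathcal{F}$-saturated and disjoint from $L$: a point of $L$ lying in $\lim(\bm{e})$ would force $L$ to accumulate on itself transversally, contradicting local closedness.

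Next I would produce a local model near $\bm{e}$. A proper leaf of a codimension one foliation of a compact manifold has finite \emph{level} in the sense of Cantwell--Conlon, so $L$ lies in a finite nested family of connected open $\mathcal{F}$-saturated sets whose metric completions, by Dippolito's octopus decomposition, are compact nuclei with finitely many product arms attached. Shrinking $\bm{e}$ if necessary, a neighborhood $N_{\bm{e}}$ of $\bm{e}$ in $L$ may be assumed to sit inside a single product arm $B\times(0,1)$ of such a saturated set, and by the Poincar\'e--Bendixson theory for leaves the leaves of $\lim(\bm{e})$ all have strictly smaller level than $L$. This sets up a descending recursion on level; its base case is when $\lim(\bm{e})$ contains a minimal set $X$ of $\mathcal{F}$.

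The $C^2$ hypothesis is used twice, and essentially, in treating the base case. By Sacksteder's theorem $X$ is either a compact leaf, all of $M$, or an exceptional minimal set carrying a leaf with nontrivial contracting holonomy; the possibility $X=M$ is excluded because $\lim(\bm{e})\cap L=\emptyset$. In either remaining case one extracts (after replacing $X$, if needed, by a leaf of $\lim(\bm{e})$ nearest to $L$ along a transversal) a holonomy germ $h$ on a one-sided transversal $[0,\delta)$ at a point near $\bm{e}$ --- here transverse orientability is used --- with $h(0)=0$ and no fixed point in $(0,\delta)$ on the side from which $L$ approaches $X$. Then Kopell's lemma, the second use of $C^2$, forces every germ in the holonomy pseudogroup of $\mathcal{F}$ realizable on this transversal and fixing $0$ to be compatible with the $\Z$-action generated by $h$, so the local holonomy group is virtually cyclic. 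It is precisely this transverse rigidity that is unavailable below $C^2$ --- indeed the leaves $Z_r$ constructed later in the paper witness its failure.

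It remains to convert this rigidity into topological periodicity of $\bm{e}$. The region of the product arm lying between a transversal $\tau$ and its image $h(\tau)$ is a compact manifold $P$, and following $L$ once through the (virtually cyclic, hence eventually combinatorially periodic) local holonomy yields an embedding $\iota\colon N_{\bm{e}}\hookrightarrow N_{\bm{e}}$ for which $\{\iota^n(N_{\bm{e}})\}_{n\in\N}$ is a neighborhood basis of $\bm{e}$ and $\overline{N_{\bm{e}}\setminus\iota(N_{\bm{e}})}$ is homeomorphic to $P$; this is exactly a periodic segment, so $\bm{e}$ is periodic --- the desired contradiction. When $\lim(\bm{e})$ contains no minimal set but only leaves of strictly smaller level, one applies the same local analysis to that lower-level piece and propagates periodicity outward through the product structure of the arm; finiteness of the level guarantees the recursion terminates. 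I expect the crux to be the extraction of contracting holonomy from the mere fact that $L$ accumulates on $X$, together with pinning down the resulting local holonomy group: both the contraction (Sacksteder) and the rigidity of commuting germs (Kopell) fail in regularity $C^r$ with $r<2$, so the argument must be arranged so that these are the only two places where $C^2$ is invoked, the rest being elementary topology of foliated products.
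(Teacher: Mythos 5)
This theorem is not proved in the paper at all: it is imported verbatim as \cite[Theorem 3.5]{Menino-Schweitzer2022}, so there is no in-paper argument to compare your proposal against. Judged as a reconstruction of the cited proof, your skeleton is the right one: reduce to showing that every (isolated) end of a proper leaf is periodic, pass to the limit set $\lim(\bm{e})$, which is saturated, compact and disjoint from $L$ by properness, localize the end in an arm of a Dippolito octopus decomposition, and use Sacksteder and Kopell as the only two genuinely $C^2$ inputs to produce a contracting holonomy germ $h$ whose fundamental domain between a transversal and its $h$-image is the periodic segment. That is indeed how the argument in \cite{Menino-Schweitzer2022} is organized, and your remark that the $C^r$, $r<2$, realizations in this paper witness exactly the failure of the Sacksteder/Kopell rigidity is the correct way to see where the hypothesis is sharp.

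Two points need repair. First, the assertion that every proper leaf of a codimension one foliation of a compact manifold has finite level is false; only \emph{totally} proper leaves of $C^2$ foliations are guaranteed finite level, and a proper leaf may well accumulate on exceptional or locally dense sets. Your ``descending recursion on level'' is therefore not available as stated (note also that $\lim(\bm{e})$ \emph{always} contains a minimal set, so the dichotomy organizing your recursion is vacuous). The standard fix is to dispense with levels entirely: apply Dippolito's completion to the connected component of $M\setminus(\overline{L}\setminus L)$ containing $L$, and analyze the single arm that the end $\bm{e}$ eventually enters, its border leaves being what $\bm{e}$ is asymptotic to. Second, the decisive step --- passing from ``the relevant holonomy group on the one-sided transversal is infinite cyclic, generated by a contraction $h$'' to ``$\iota$ is a self-embedding of an end neighborhood with compact fundamental domain $\overline{N\setminus\iota(N)}$ independent of the iterate'' --- is asserted rather than argued. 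One must check that $L$ meets the transversal in a single $h$-orbit (finitely many ends is used here), that the holonomy translation along $h$ is realized by an actual homeomorphism of the foliated product structure of the arm, and that the region between consecutive intersections is compact; this is where most of the work in \cite{Menino-Schweitzer2022} lives, and as written your proposal identifies the crux without carrying it out.
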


\begin{defn}
Let $M_\omega$ be an $n$ dimensional Ghys manifold and let $L_p$ be a closed $n$-dimensional manifold whose fundamental group is isomorphic to $\Z_p$. The manifold $M_{p,\omega}=L_p\# M_\omega$ will be called a {\em perturbed} Ghys manifold.
\end{defn}

\begin{rem}
Observe that any perturbed Ghys manifold can be realized as a leaf of a $C^\infty$ codimension $2$ manifold. In order to see this, let $V_{2}=L_p\# V_0$ and $V_{3} = L_p\# V_1$ that will be called perturbed blocks. Consider {\em generalized} Ghys manifolds $M_\lambda$ for $\lambda\in\{0,1,2,3\}^\Z$ obtained from boundary unions of fundamental and perturbed blocks according with the (bi)sequence $\lambda$. A perturbed Ghys manifold can be seen as a generalized Ghys manifold associated to a sequence $\lambda$ where $2$ or $3$ appears only once. Every generalized Ghys manifold can be realized as a leaf of a codimension $2$ foliation by the same procedure shown in \cite[p. 396]{Ghys1995} which uses the classical realization theorem (relative version) given in \cite{Thurston1974} for foliations of codimension greater than one.
\end{rem}


\begin{lemma}\label{l:rigid_block}
Let $M_{p,\omega}$ be a perturbed Ghys manifold. Let $W_p$ be a manifold with boundary homeomorphic to $L_p$ with a disk removed. If $X_1,X_2$ are submanifolds of $M_{p,\omega}$ homeomorphic to $W_p$ then $X_1\cap X_2\neq\emptyset$.
\end{lemma}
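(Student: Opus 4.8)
The plan is to argue by contradiction at the level of fundamental groups, exploiting how finite subgroups sit inside free products.

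\emph{Algebraic preliminaries.} Since $M_{p,\omega}=L_p\#M_\omega$ is a connected sum of $n$-manifolds with $n>3$, van Kampen gives $\pi_1(M_{p,\omega})\cong\Z_p\ast\Gamma^\omega$, where $\Gamma^\omega=\pi_1(M_\omega)$. The group $\Gamma^\omega$ is torsion-free: it is the fundamental group of a graph of groups whose underlying graph is a bi-infinite line, with free abelian vertex groups $\Z^{m_{\omega(i)}}$ and infinite cyclic edge groups (as $\pi_1(S^{n-2}\times S^1)=\Z$ because $n-2\geq 3$), and a group acting on a tree with torsion-free vertex stabilizers is itself torsion-free. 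Hence, by the Kurosh subgroup theorem, every finite subgroup of $\Z_p\ast\Gamma^\omega$ is conjugate into a free factor, and since $\Gamma^\omega$ is torsion-free it must be conjugate into $\Z_p$; in particular every subgroup of order $p$ equals a conjugate of the factor $\Z_p$, so $\pi_1(M_{p,\omega})$ has \emph{exactly one} conjugacy class of subgroups of order $p$. On the other side, as $W_p$ is $L_p$ with an open $n$-ball removed and $n>3$, we have $\pi_1(W_p)\cong\Z_p$ and $\partial W_p\cong S^{n-1}$ is connected and simply connected.

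\emph{Reduction to a free-product identity.} Suppose $X_1\cap X_2=\emptyset$. Each $X_i$ is a compact codimension-zero submanifold whose frontier in $M_{p,\omega}$ is its boundary $\partial X_i\cong S^{n-1}$. Put $Z=\overline{M_{p,\omega}\setminus(X_1\cup X_2)}$. I first check that $Z$ is connected: its frontier is $\partial X_1\sqcup\partial X_2$, so any component of $Z$ meeting neither sphere would be a closed $n$-submanifold of $M_{p,\omega}$, hence all of $M_{p,\omega}$, which is absurd; and if $\partial X_1$ and $\partial X_2$ lay in distinct components $Z_a,Z_b$, then $X_1\cup Z_a$ and $X_2\cup Z_b$ would be disjoint nonempty closed sets covering $M_{p,\omega}$, contradicting connectedness. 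Thickening $X_1$, $X_2$ and $Z$ across collars of the separating spheres and applying van Kampen twice — the relevant intersections all being copies of the simply connected $S^{n-1}$ — yields
\[
\pi_1(M_{p,\omega})\;\cong\;\pi_1(X_1)\ast\pi_1(X_2)\ast\pi_1(Z)\;\cong\;\Z_p\ast\Z_p\ast\pi_1(Z).
\]
In this free product the two factors $\Z_p$ are subgroups of order $p$ that are not conjugate, since distinct free factors of a free product are never conjugate (e.g.\ from the action on the Bass--Serre tree: a nontrivial finite subgroup fixes a unique vertex, and the vertices fixed by the two factors have different types). Thus $\pi_1(M_{p,\omega})$ would carry at least two conjugacy classes of subgroups of order $p$, contradicting the preliminaries; hence $X_1\cap X_2\neq\emptyset$.

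I expect the main obstacle to be the low-tech part rather than the algebra: making precise that ``submanifold homeomorphic to $W_p$'' means a (locally flat) codimension-zero submanifold whose frontier is exactly $\partial X_i$ and which admits a bicollar, so that the pieces $X_1,X_2,Z$ really assemble by van Kampen as above, together with the connectedness of $Z$. Once that is granted the rest is standard; one could instead phrase the conclusion through uniqueness of Grushko-type free product decompositions, but counting order-$p$ subgroups up to conjugacy avoids imposing any finite-generation hypothesis on $\Gamma^\omega$, and it makes transparent that the torsion introduced by the $L_p$-summand is exactly what drives the argument.
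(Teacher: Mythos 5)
Your argument is correct, but it runs along a genuinely different track from the paper's. The paper follows Ghys's Lemme 2.1 and stays entirely in first homology: setting $Y=\overline{M_{p,\omega}\setminus(X_1\cup X_2)}$, the Mayer--Vietoris sequence for the decomposition $M_{p,\omega}=(X_1\cup X_2)\cup Y$ along $\partial X_1\sqcup\partial X_2$ (spheres of dimension $n-1\geq 2$, hence with trivial $H_1$) yields a monomorphism $H_1(X_1\cup X_2,\Z)\oplus H_1(Y,\Z)\to H_1(M_{p,\omega},\Z)=\Z_p\oplus\bigoplus_{i\in\Z}\Z$; if $X_1\cap X_2=\emptyset$ the source contains $\Z_p\oplus\Z_p$, which cannot inject into a group whose torsion subgroup is a single $\Z_p$. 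You instead work at the level of $\pi_1$: van Kampen turns the same decomposition into $\Z_p\ast\Z_p\ast\pi_1(Z)$, while Kurosh together with the torsion-freeness of $\Gamma^\omega$ shows that $\Z_p\ast\Gamma^\omega$ has exactly one conjugacy class of order-$p$ subgroups. Both proofs are sound, and both quietly assume the same mild regularity (bicollared separating spheres, so that the gluing theorem applies); you are right to flag this, and the paper's version needs it just as much. The homological route is shorter: it needs neither the connectedness of the complement nor the Kurosh theorem, since disjointness alone forces $H_1(X_1\cup X_2)=\Z_p\oplus\Z_p$. Your route requires the connectedness check you supply and the verification that $\Gamma^\omega$ is torsion-free (correctly extracted from its graph-of-groups structure), but in exchange it is more robust: it sees the free-product structure directly rather than its shadow in $H_1$, so it would survive even in situations where the abelianization of the unperturbed group happened to carry $p$-torsion.
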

\begin{proof}
The proof is very similar to \cite[Lemme 2.1]{Ghys1985}. In this case $H_1(W_p,\Z)=\Z_p$ and $H_1(M_{p,\omega},\Z)=\Z_p\oplus\bigoplus_{i\in\Z}\Z$. Set $Y = \overline{M_{p,\omega}\setminus (X_1\cup X_2)}$. The Mayer-Vietoris sequence yields a monomorphism from $H_1(X_1\cup X_2,\Z)\oplus H_1(Y,\Z)\to H_1(M_{p,\omega})$ since the boundary of $X_1$ and $X_2$ are spheres of dimension $n-1\geq 2$ (hence trivial $H_1$).

If $X_1\cap X_2=\emptyset$ then $H_1(X_1\cup X_2,\Z)=\Z_p\oplus\Z_p$. This is in contradiction with the fact that $H_1(M_{p,\omega},\Z)$ has only one factor of finite order.
\end{proof}

In the sense of \cite{Menino-Schweitzer2022}, $W_p$ is called a rigid block of $M_{p,\omega}$.

\begin{prop}\label{p:non_nonproper}
For any odd $p$, any $n$-dimensional, $n >3$, perturbed Ghys manifold cannot be homeomorphic to a nonproper leaf of a $C^2$ codimension one foliation on a compact manifold.
\end{prop}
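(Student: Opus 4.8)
The plan is to argue by contradiction and to violate Lemma~\ref{l:rigid_block}. So suppose $L$ is a nonproper leaf of a transversely $C^{2}$ codimension one foliation $\mathcal{F}$ of a compact manifold $\MM$, and that $L$ is homeomorphic to the perturbed Ghys manifold $M_{p,\omega}=L_{p}\#M_{\omega}$ with $p$ odd and $n>3$. The connected sum decomposition exhibits inside $M_{p,\omega}$ a compact connected submanifold homeomorphic to $W_{p}$ ($=L_{p}$ with an open disk removed), so $L$ contains a submanifold $W$ homeomorphic to $W_{p}$. We have $\pi_{1}(W)\cong\pi_{1}(W_{p})$, and since $n>3$ the sphere $S^{n-1}$ is simply connected, so van Kampen gives $\pi_{1}(W_{p})\cong\pi_{1}(L_{p})\cong\Z_{p}$. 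The aim is to produce a second submanifold $W'\subset L$ homeomorphic to $W_{p}$ and disjoint from $W$: this contradicts Lemma~\ref{l:rigid_block}, since $W,W'$ would then be two disjoint copies of $W_{p}$ inside $M_{p,\omega}$.

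The crucial first step is to show that the holonomy of $L$ along loops contained in $W$ is trivial, and this is exactly where oddness of $p$ enters. Fix $y_{0}\in W$ and a small transversal $T$ through $y_{0}$, and let $\mathrm{hol}\colon\pi_{1}(L,y_{0})\to\mathrm{Germ}(T,y_{0})$ be the holonomy homomorphism. The subgroup $H:=\mathrm{hol}(\iota_{*}\pi_{1}(W,y_{0}))$ is a finite cyclic group whose order divides $p$, hence is odd. The orientation character $\mathrm{Germ}(T,y_{0})\to\{\pm1\}$ is therefore trivial on $H$, so every element of $H$ is an orientation-preserving (hence strictly increasing) germ of homeomorphism of $(\R,0)$ of finite order; an elementary monotonicity argument shows that such a germ is the identity (for an increasing germ $g$ fixing $0$, having $g(t)>t$ or $g(t)<t$ for arbitrarily small $t$ would force $g^{d}(t)\neq t$ for every $d$). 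Hence $H$ is trivial, i.e.\ $W$ has trivial holonomy in $L$.

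Next I would convert this into geometry. Since $W$ is compact and connected and has trivial holonomy, a standard patching argument (cover $W$ by finitely many foliation charts and glue the transverse coordinates using the vanishing of the holonomy cocycle) produces a topological embedding $\Phi\colon W\times(-\delta,\delta)\hookrightarrow\MM$ subordinate to $\mathcal{F}$: $\Phi(W\times\{0\})=W$, each $\Phi(\{w\}\times(-\delta,\delta))$ lies in a transversal, and each slice $\Phi(W\times\{s\})$ lies in a single leaf (being connected and locally contained in leaves); compactness of $W$ yields the uniform $\delta$ and the injectivity of $\Phi$. Finally I invoke nonproperness: a nonproper leaf is nonproper at each of its points, since failure of properness is transported along paths in $L$ by holonomy; in particular $y_{0}$ is not isolated in $L\cap T'$ for any transversal $T'$ through $y_{0}$. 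Applying this to $T'=\Phi(\{y_{0}\}\times(-\delta,\delta))$ gives a sequence $s_{k}\to 0$, $s_{k}\neq 0$, with $\Phi(y_{0},s_{k})\in L$. For such an $s_{k}$ the slice $W':=\Phi(W\times\{s_{k}\})$ lies in the leaf through $\Phi(y_{0},s_{k})$, which is $L$; it is homeomorphic to $W_{p}$; and $W'\cap W=\emptyset$ since $\Phi$ is injective. This is the $W'$ we wanted.

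I expect the only genuine obstacle to be the first step: recognising that triviality of the holonomy of the rigid block is the thing to establish, and that the odd order of $\pi_{1}(W_{p})$ delivers it. Everything after that is a routine combination of a product-neighbourhood construction (a local Reeb-stability type statement) with the elementary self-accumulation of nonproper leaves. It is worth noting that the argument uses only that $p$ is odd and $n>3$; the $C^{2}$ hypothesis plays no role here and is retained only for uniformity with Theorem~\ref{t:C2nonleaf}.
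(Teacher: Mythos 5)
Your proof is correct and follows essentially the same route as the paper's: kill the holonomy of the rigid block $W_p$ using the odd order of $\Z_p$, obtain a product neighbourhood of it, and use nonproperness of $L$ to produce a second disjoint copy of $W_p$, contradicting Lemma~\ref{l:rigid_block}. The only difference is cosmetic: where the paper invokes local Reeb stability and the nonexistence of nontrivial odd-order finite groups of homeomorphisms of $[-1,1]$ fixing $0$, you prove that step directly via the orientation-character and monotone-germ argument.
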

\begin{proof}
Assume that $M_{p,\omega}$ is homeomorphic to some leaf $L$ of a codimension one foliation on a compact manifold. Let $W_p$ be a submanifold (with boundary) of $L$ homeomorphic to $L_p$ with a disk removed. Since $\pi_1(W_p)=\Z_p$, we can apply Reeb stability to $W_p$ and we obtain a transverse negihborhood of $W_p$ in the ambient manifold foliated as a suspension over $W_p$ of a group of homeomorphisms of the interval $[-1,1]$ fixing $0$ ($W_p$ is identifyed with the suspended leaf at $0$). There are no finite groups of $[-1,1]$ with odd order and therefore this suspension must be in fact trivial, i.e., it is foliated as a product.

If $L$ were nonproper, $L$ would meet this neighborhood (foliated as a product) infinitely many times. In particular there would exist infinitely many pairwise disjoint submanifolds homeomorphic to $W_p$ in $L$ and, therefore, in $M_{p,\omega}$, that is in contradiction with Lemma~\ref{l:rigid_block}.
\end{proof}

\begin{cor}
Let $n>3$, any $n$-dimensional forward or backward nonperiodic perturbed Ghys manifold is not homeomorphic to any leaf of any $C^2$ codimension $1$ foliation on a compact manifold.
\end{cor}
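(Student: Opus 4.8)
The plan is to split into the proper and nonproper cases and dispatch each using the results already established. Fix a perturbed Ghys manifold $M_{p,\omega}$ with $p$ odd (as required by Proposition~\ref{p:non_nonproper}) and with $\omega$ forward or backward nonperiodic, and suppose for contradiction that $M_{p,\omega}$ is homeomorphic to a leaf $L$ of a $C^2$ codimension one foliation $\mathcal{F}$ on a compact manifold $M$.

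First I would observe that $M_{p,\omega}$ is an open manifold with exactly two ends, at least one of which is topologically nonperiodic. Since $L_p$ is closed, the connected sum $L_p\#M_\omega$ changes $M_\omega$ only inside a compact region; hence $M_{p,\omega}$ has the same ends as $M_\omega$ and, sufficiently far out along each end, has a neighborhood homeomorphic to the corresponding neighborhood in $M_\omega$. Consequently end periodicity for $M_{p,\omega}$ agrees with that for $M_\omega$, so Proposition~\ref{p:nonperiodic_Ghys} together with the hypothesis on $\omega$ produces a nonperiodic end. If $L$ is a \emph{nonproper} leaf of $\mathcal{F}$, Proposition~\ref{p:non_nonproper} yields a contradiction immediately, so we may assume that $L$ is a proper leaf.

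If $\mathcal{F}$ is transversely oriented, Theorem~\ref{t:C2nonleaf} applied with $W=M_{p,\omega}$ finishes the argument, since $M_{p,\omega}$ is open with finitely many ends and a nonperiodic end. To drop the transverse orientation hypothesis I would pass to the double cover $\pi\colon\widetilde M\to M$ on which the lifted foliation $\widetilde{\mathcal{F}}$ is transversely oriented; $\widetilde M$ is still compact. Because $\pi$ is a local homeomorphism, $\pi^{-1}(L)$ is a union of leaves of $\widetilde{\mathcal{F}}$ and is locally closed in $\widetilde M$, so each connected component $\widetilde L$ is a proper leaf of $\widetilde{\mathcal{F}}$; moreover $\widetilde L$ is either homeomorphic to $M_{p,\omega}$, in which case Theorem~\ref{t:C2nonleaf} applies verbatim, or it is a connected double cover of $M_{p,\omega}$.

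The step I expect to be the main obstacle is this last subcase: one must check that a connected double cover $\widetilde L$ of a forward or backward nonperiodic perturbed Ghys manifold is again an open manifold with finitely many ends, one of which is nonperiodic, so that Theorem~\ref{t:C2nonleaf} can be invoked. Finiteness of the set of ends is clear, since a double cover of a two-ended manifold has at most four ends. For the nonperiodic end I would argue as follows: over a neighborhood $N^+_{\omega,k}$ of the nonperiodic end of $M_{p,\omega}$ (which, for $k$ large, lies in the unperturbed part and has fundamental group $\Gamma^{\omega,k}_+$), the cover restricts either to a trivial double cover, producing two ends homeomorphic to that one and hence nonperiodic, or to a connected double cover $\widetilde N$ of $N^+_{\omega,k}$. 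In the latter case $\pi_1(\widetilde N)$ corresponds to an index two subgroup of $\Gamma^{\omega,k}_+$ whose maximal–rank abelian subgroups are finite–index subgroups of conjugates of the $\Gamma^\omega_i$'s and hence still of maximal rank, so properties \ref{i:Property1} and \ref{i:Property2} persist for $\widetilde N$; the argument of Lemma~\ref{l:endnonperiodicty_Ghys} and Proposition~\ref{p:nonperiodic_Ghys} then shows that an end-periodic self-embedding of $\widetilde N$ would force $\omega$ to be eventually periodic, contrary to hypothesis. With the nonperiodic end of $\widetilde L$ established, Theorem~\ref{t:C2nonleaf} closes the proper case, and together with Proposition~\ref{p:non_nonproper} the corollary follows.
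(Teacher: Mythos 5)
Your proof follows the same route as the paper's: split into the proper and nonproper cases, dispatching the former with Theorem~\ref{t:C2nonleaf} (after noting, via Proposition~\ref{p:nonperiodic_Ghys} and the fact that the connected sum with $L_p$ only alters a compact region, that a nonperiodic end is present) and the latter with Proposition~\ref{p:non_nonproper}. Your additional double-cover discussion to remove the transverse-orientability hypothesis of Theorem~\ref{t:C2nonleaf} is a sensible precaution that the paper's one-line proof does not spell out, but it does not change the approach; note only that, like the paper, your argument genuinely needs $p$ odd in the nonproper case.
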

\begin{proof}
A perturbed Ghys manifold which is forward or backward nonperiodic satisfies the hypotheses of Theorem~\ref{t:C2nonleaf} and therefore cannot be homeomorphic to a proper leaf of any $C^2$ codimension one foliation on a compact manifold, but Proposition~\ref{p:non_nonproper} implies that it cannot be also homoemorphic to a nonproper leaf. It turns out that it is a $C^2$ codimension one nonleaf.
\end{proof}

Therefore nonperiodic perturbed Ghys manifolds belong to a class of manifolds that lie at the boundary of the realization problem between $C^2$ codimension one and higher codimension foliations, these are the first examples (as far as we know) of manifolds with this property.

\section{Realization of (perturbed) nonperiodic Ghys manifolds}

It is time to study the realization problem for perturbed and nonperiodic Ghys manifolds for $C^r$, $r<2$, codimension one foliations. It will be shown that some perturbed and nonperiodic Ghys manifolds can be realized as leaves in this context. These Ghys manifolds belong to a class of interesting manifolds: those that are $C^2$ codimension one nonleaves but can be realized as leaves of some $C^1$ foliations on some compact manifolds.

It is unclear if every perturbed nonperiodic Ghys manifold satisfies the above property and it would be target of future research, we will state some conjectures about this question in the last section.

For future reference, the {\em saturation} of a set $I$ in a foliation $\FF$, i.e. the set of leaves that meet points in $I$, will be denoted by $\str_\FF(I)$. 


\begin{prop}\label{p:good foliation}
For every $0\leq r < 2$, there exists a $5$-dimensional perturbed and nonperiodic Ghys manifold which is homeomorphic to a (proper) leaf of a $C^r$ codimension one foliation on a compact manifold.
\end{prop}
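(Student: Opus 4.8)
The plan is to build the required $C^r$ codimension one foliation by the Cantwell--Conlon inflation/Markov pseudogroup technique (cf.\ \cite[Section 9]{Cantwell-Conlon2001}, \cite[Theorem 1.4.8]{Walczak2004}), realizing a carefully chosen almost periodic sequence $\omega$ so that the exceptional minimal set has leaves modeled on perturbed Ghys blocks. First I would fix a pseudogroup model on a one-dimensional transversal: take a Cantor set $K\subset[0,1]$ together with finitely many contractions whose dynamics realizes a subshift $\Sigma\subset\{0,1,2,3\}^{\Z}$ (including one perturbed symbol, appearing exactly once, to get a \emph{perturbed} Ghys manifold), chosen to be minimal and \emph{almost periodic but not periodic}; a Sturmian-type or Toeplitz-type subshift does the job. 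The holonomy pseudogroup is a Markov pseudogroup in the sense of Cantwell--Conlon, so their construction attaches to it a codimension one foliated compact manifold $(M,\FF)$ whose minimal set $\MM$ has holonomy conjugate to $(K,\Sigma)$, and the leaf through a point $x\in K$ is obtained by gluing fundamental/perturbed blocks $V_{\omega_x(i)}$ along the bi-infinite itinerary $\omega_x$ of $x$. By construction this leaf is a $5$-dimensional perturbed Ghys manifold $M_{p,\omega_x}$, and since $\omega_x$ is (forward and backward) nonperiodic by the choice of $\Sigma$, Proposition~\ref{p:nonperiodic_Ghys} guarantees it is nonperiodic.

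Next I would arrange that this leaf is \emph{proper} rather than contained in its closure. The minimal set $\MM$ itself is exceptional, so a leaf inside $\MM$ is never proper; instead I would use the standard trick of \emph{spinning} or \emph{opening} a leaf: remove one orbit of the transversal Cantor set from $K$ and reinsert an interval (a ``gap''), equivalently perform a Denjoy-type blow-up along the $\Sigma$-orbit of a chosen point $x_0$. The resulting pseudogroup is still $C^r$ for every $r<2$ (this is exactly where the regularity ceiling enters: one can smooth a Denjoy-type insertion to class $C^1$, indeed to any $C^r$ with $r<2$, but Denjoy's theorem forbids $C^2$), and the leaf over the inserted gap is a copy of $M_{p,\omega_{x_0}}$ which now lies in the boundary of $\MM$ and is therefore a proper, non-compact leaf of the new foliation on the (same underlying) compact manifold. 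The transverse regularity claim should be stated as: the inserted map can be taken $C^\infty$ away from the endpoints and $C^r$-flat at the endpoints for any prescribed $r<2$, so $\FF$ is transversely $C^r$.

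The key steps in order are: (i) choose the almost periodic minimal subshift $\Sigma$ on four symbols with a single perturbed symbol; (ii) present it as a Markov pseudogroup and invoke the Cantwell--Conlon realization to get $(M,\FF_0)$, $\dim M = 6$, $\dim\FF_0 = 5$, with minimal set $\MM$ whose leaves are the $M_{p,\omega_x}$; (iii) identify the leaf topology with perturbed Ghys manifolds via the block decomposition, and check nonperiodicity through Proposition~\ref{p:nonperiodic_Ghys}; (iv) blow up along one orbit to produce a proper leaf while controlling the transverse regularity to be exactly $C^r$ for the prescribed $r<2$. I expect step (iv) — simultaneously making the leaf proper and keeping the holonomy $C^r$ (with the sharp obstruction at $C^2$ coming from Denjoy theory) — to be the main obstacle, since one must verify that the Cantwell--Conlon construction is compatible with a $C^r$ Denjoy insertion and that the inserted leaf genuinely receives the desired nonperiodic block structure; steps (i)--(iii) are essentially bookkeeping on top of the cited constructions.
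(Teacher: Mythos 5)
Your overall strategy --- realize a Sturmian-type minimal symbolic system as the holonomy of an exceptional minimal set of a codimension one foliation whose minimal-set leaves are chains of the blocks $V_i$, with the Denjoy obstruction explaining the $r<2$ ceiling --- is essentially the paper's (which glues the two transverse boundary components of an explicitly constructed foliated $6$-manifold by a $C^{1+\alpha}$ Denjoy diffeomorphism). But there is a genuine gap in how you produce the \emph{perturbed} Ghys manifold as a \emph{proper} leaf, and that is the heart of the statement. First, a minimal subshift cannot contain a sequence in which a symbol occurs exactly once: by minimality every word occurring in one element occurs syndetically (hence infinitely often) in every element, so ``one perturbed symbol appearing exactly once'' is incompatible with $\Sigma$ being minimal. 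This is not a technicality: Lemma~\ref{l:rigid_block} shows the rigid block $W_p$ occurs only once in $M_{p,\omega}$, which is exactly why such a manifold can never be a leaf of the minimal set itself. Second, your step (iv) --- a Denjoy-type blow-up of one orbit --- does yield a proper leaf, but that leaf is product-parallel to a leaf of the minimal set and is therefore an \emph{unperturbed} Ghys manifold; nothing in your construction inserts the single $L_p$ connected summand into the proper leaf and only into it, and a connected sum cannot be performed on one leaf in isolation --- it must be done foliated-wise on an interval's worth of leaves.

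The paper resolves precisely this point. The perturbed block is built into the foliation from the start by removing a tubular neighborhood of a transverse arc and gluing in a product $W_p\times J$, so that a third transversal interval $I_2$ carries the perturbed block $V_2$ as a product; the Denjoy map $\varphi$ is then chosen so that its minimal set $\Lambda$ lies in $I_0\cup I_1$ while $I_2$ sits in a gap of $\Lambda$, and a ``gap condition'' forces the $\varphi$-orbit of an endpoint $x$ of $I_2$ to lie in $I_0\cup I_1$ for all $n\neq 0$. The leaf through $x$ is then proper (its orbit accumulates only on $\Lambda$), crosses $I_2$ exactly once, and is the desired perturbed nonperiodic Ghys manifold, while the minimal-set leaves remain unperturbed nonperiodic Ghys manifolds. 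To close your argument you would need an analogous mechanism, and you would also need to justify two points you treat as bookkeeping: that a Toeplitz subshift can be realized by a $C^1$ interval pseudogroup whose restriction to the minimal set is conjugate to a $\Z$-action with linear (not tree-like) leaf structure (the Denjoy construction only delivers Sturmian codings), and that the Markov realization can be arranged so that the blocks have the prescribed topology $\pi_1=\Z^3$, $\Z^5$ --- this last point is the bulk of the paper's work (the turbulization, tunneling and bridge steps of Lemma~\ref{l:core_foliation}).
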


In order to prove Proposition~\ref{p:good foliation} we need to construct first a suitable foliation with boundary, this is given in the following Lemma. The construction was motivated by the works \cite[Section 9]{Cantwell-Conlon2001} and \cite[Theorem 1.4.8]{Walczak2004}.

\begin{lemma}\label{l:core_foliation}
There exists a $C^\infty$ codimension one foliation $\GG$ on a compact $6$-manifold with boundary such that:
\begin{enumerate}[label=\textbf{A.\arabic*}]
\item \label{A:Property1} The boundary of $\GG$ is transverse and consist of two components, $B_-$ and $B_+$, homeomorphic to $(S^1\times S^3)\times S^1$.

\item \label{A:Property2} The trace foliations on the boundary components are given by products whose leaves have the form $S^1\times S^3\times\{\ast\}$.

\item \label{A:Property3} There exist three pairwise disjoint transverse arcs, $I^-_i$, $i=0,1,2$, in $B_-$ and another three disjoint transverse arcs, $I^+_i$ in $B_+$, $i=0,1,2$, such that
$\str_\GG(I^-_i)=\str_\GG(I^+_i)$, for $i=0,1,2$; $\str_\GG(I^-_i)$ is foliated as a product $V_i\times I$, being $I$ a closed interval where the $V_0,V_1$ are $5$-dimensional fundamental blocks with $\pi_1(V_0)=\Z^3$, $\pi_1(V_1)=\Z^5$ and $V_2$ is a perturbed block.
\end{enumerate}
\end{lemma}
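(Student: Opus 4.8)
The plan is to build $\GG$ as a suspension-type foliation over a surface-with-boundary carrying a suitable $\Z$-action, or more directly as an iterated mapping-torus-like object that realizes a Markov-type pseudogroup on an interval. First I would fix a compact interval $I=[-1,1]$ and three disjoint closed subintervals $J_0,J_1,J_2\subset \intr I$. The combinatorial data I want to encode is: a homeomorphism germ that, roughly, expands $I$ and whose ``return'' structure on the $J_i$ reproduces the three blocks $V_0,V_1,V_2$. Concretely, I would take a base manifold $P$ which is the mapping torus of a diffeomorphism of $I$ chosen so that the associated suspension pseudogroup has the prescribed orbit/return behavior on the $J_i$; then I would thicken each $V_i$-factor. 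Because $\dim V_i=5$, the total space of the $V_i\times I$ pieces is $6$-dimensional, matching the required ambient dimension. The point of working in the $C^\infty$ category here is that nothing forces low regularity yet: we are only building one ``core'' piece with transverse boundary, and all gluings can be made smooth.

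Next, the key steps in order. (i) Realize each fundamental/perturbed block $V_i$ together with a collar of its two boundary tori as a product $V_i\times I$, and specify how the two ends $T_{\alpha}$-side and $T_{2\beta}$-side sit over the endpoints of the transverse arc $I^{\pm}_i$; here I would use that $\partial V_i\cong S^{n-2}\times S^1=S^3\times S^1$ so that a neighborhood in the boundary component $(S^1\times S^3)\times S^1$ is a product of the block boundary with a transverse arc, which is exactly what \ref{A:Property1}--\ref{A:Property2} demand. (ii) Assemble the three product pieces $V_i\times I$ into a connected compact $6$-manifold with corners, rounding corners so that the resulting boundary is transverse to the foliation and splits as $B_-\sqcup B_+$, each a copy of $(S^1\times S^3)\times S^1$; the $S^1$-factors of $B_{\pm}$ are, on one hand, the $S^1$ in $\partial V_i$ and, on the other, a circle that records the transverse coordinate together with the way the arcs $I^{\pm}_i$ are cyclically arranged. (iii) Choose the identifications so that the saturations satisfy $\str_\GG(I^-_i)=\str_\GG(I^+_i)$: this is a bookkeeping condition saying each transverse arc on $B_-$ is carried by the foliation to the correspondingly-indexed arc on $B_+$, which I arrange by making the holonomy pseudogroup act on $I$ so that $J_i$ on the ``incoming'' side is mapped onto $J_i$ on the ``outgoing'' side. (iv) Finally, check smoothness of $\GG$ and transversality of $\partial$, and verify \ref{A:Property2} by construction (the trace foliations are the product foliations $V_i\times\{\ast\}$ near the arcs, and these patch to the global product form $S^1\times S^3\times\{\ast\}$ on $B_{\pm}$).

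The main obstacle I expect is step (ii)--(iii): making the three product blocks fit into a single \emph{connected} $6$-manifold whose boundary is exactly two copies of $(S^1\times S^3)\times S^1$ with the product trace foliation, while simultaneously keeping the transverse holonomy on $I$ under control so that the three saturations are disjoint (the arcs $I^{\pm}_i$ must be genuinely disjoint and their saturations distinct leaves) and so that $\str_\GG(I^-_i)=\str_\GG(I^+_i)$ holds on the nose. This is essentially the content of the Cantwell--Conlon construction in \cite[Section 9]{Cantwell-Conlon2001} (cf.\ \cite[Theorem 1.4.8]{Walczak2004}), adapted from their Markov pseudogroup setting: one turns the $3$-symbol ``transition'' data $\{0,1,2\}$ into a foliated Markov block by suspending an interval map with three disjoint ``return'' subintervals, and then replaces each transition by the corresponding $V_i\times I$. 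I would carry this out by first doing the purely $2$-dimensional (or $1$-dimensional transverse) model, checking all the combinatorics there, and only afterwards crossing with the blocks $V_i$; the higher-dimensional parts of the argument are then formal, since crossing with a closed manifold preserves smoothness, transversality of the boundary, and the product structure of the trace foliation.
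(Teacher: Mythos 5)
Your plan correctly identifies the target structure (three product pieces $V_i\times I$ meeting two transverse boundary components homeomorphic to $(S^1\times S^3)\times S^1$ with product trace foliations) and the right source of inspiration (Cantwell--Conlon), but it leaves unproved exactly the part of the lemma that carries all the content. The problem is your step (ii): a product piece $V_i\times I$ has, besides its transverse boundary $\partial V_i\times I$, two \emph{tangential} boundary leaves $V_i\times\{\pm 1\}$, and no amount of corner rounding makes these disappear --- to satisfy \ref{A:Property1} they must become interior leaves of a strictly larger foliated region, so you are forced to construct a compact foliated ``core'' with controlled holonomy and leaf topology into which the three bridges are plugged. Likewise, the complement of the three arcs $I_i^{\pm}$ in the transverse circle of $B_{\pm}$ must be swept out by traces of \emph{other} leaves (necessarily not of the form $V_i\times\{\ast\}$), still of the form $S^1\times S^3\times\{\ast\}$, and the proposal says nothing about what those leaves are. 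The fallback you gesture at (a suspension over a mapping torus of an interval diffeomorphism, then ``crossing with the blocks'') does not work as stated: the $V_i$ are three different manifolds with boundary and distinct fundamental groups, so one cannot obtain them by crossing a two-dimensional model with a fixed factor, and it is not clear how such a suspension would yield saturations of closed arcs that are exact products while the ambient manifold closes up with only two transverse boundary components.

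The paper fills precisely this gap with an explicit five-step construction: start from the Reeb foliation on $S^1\times[-1,1]$ multiplied by $S^2$; turbulize along three pairs of transverse loops whose transverse sections are $D^3$, $D^3$ and $D^2\times S^1$ (this choice is what later produces fundamental groups of ranks $3$ and $5$); delete the interiors of the resulting generalized Reeb components; cross with $T^2$ and perform a connected sum with $L_p$ along two transverse arcs to create the perturbed blocks; ``tunnel'' by removing tubular neighborhoods of the products of the loops $a$ and $2b$ in $T^2$ with suitable transverse arcs, which simultaneously creates the $S^3\times S^1$ boundary components of the blocks and the transverse boundary $B_{\pm}$ of the ambient manifold; and finally attach the bridges $V_i\times[-1,1]$ to the resulting compact tangential boundary leaves and glue the two remaining tangential boundary leaves $K_-$ and $K_+$ to each other by a homeomorphism permuting their boundary components. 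If you want to salvage your outline you must supply an equivalent core; as written, the proposal assumes the conclusion of the hardest step.
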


\begin{proof}\
The proof is constructive and separated in several steps.\\

\textbf{Step 1: The initial foliation.}\\

Let $(M_1,\FF_1)$ be a $C^\infty$ codimension one foliation on $M_1 =S^2\times S^1\times [-1,1]$ foliated by leaves homeomorphic to $S^2\times\R$ spiralling to the two tangential boundary components $S^2\times S^1\times\{\pm 1\}$. This foliation is obtained from a Reeb foliation on $S^1\times [-1,1]$, multiplying each leaf by $S^2$.\\

\textbf{Step 2: Turbulizing along transverse circles.}\\

Let us consider three pairs of loops transverse to $\FF_1$. Let us denote them by $\gamma^-_i$ and $\gamma^+_i$, for $i=0,1,2$. Let $D_0^-$ , $D_0^+$, $D_2^-$ and $D_2^+$ be four pairwise disjoint $3$-dimensional disks  transverse  to $\gamma^-_0$, $\gamma^+_0$, $\gamma^-_2$, $\gamma^+_2$ respectively. Similarly, let $T_1^-$ and $T_1^+$ be two pairwise disjoint solid tori (i.e., homeomorphic to $D^2\times S^1$) transverse to $\gamma^-_1$, $\gamma^+_1$ respectively.

\begin{figure}[h]
\begin{center}
\includegraphics[scale=0.2]{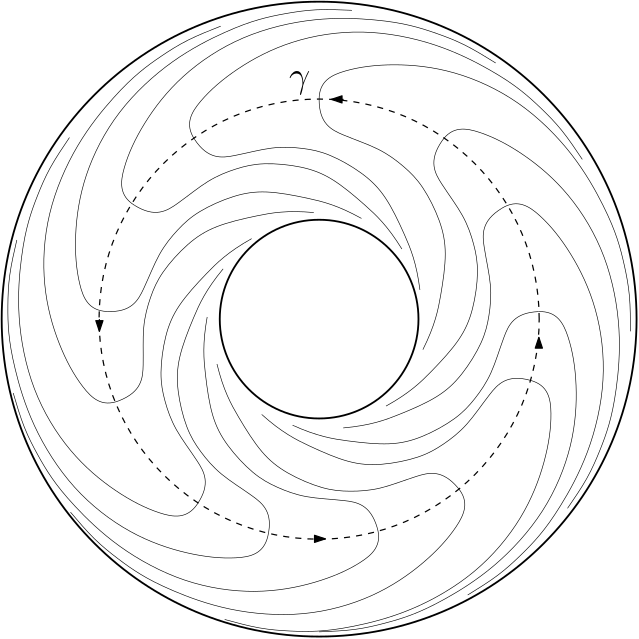}
\includegraphics[scale=0.2]{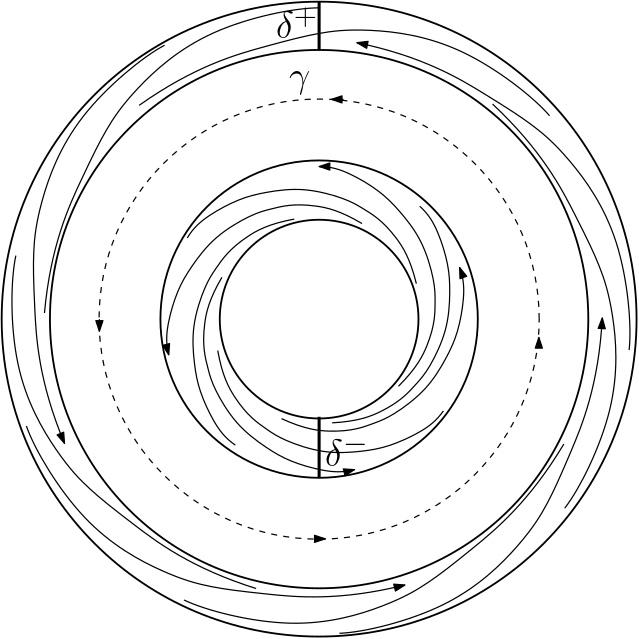}
\caption{Initial foliation (the $S^2$ factor is collapsed) and the turbulized one along a suitably oriented loop $\gamma$. Observe the transverse paths $\delta^{\pm}$ joining the new boundary leaf with the older ones.}
\end{center}
\end{figure}

Turbulize the foliation $\FF_1$ along the loops $\gamma^{\pm}_i$, $i=0,1,2$. We obtain a foliation with three pairs of compact leaves corresponding to the given turbulizations, the first two pairs, corresponding to the turbulizations along the loops $\gamma^{\pm}_0$ and $\gamma^\pm_2$, are four compact leaves homeomorphic to $\partial(D^3)\times S^1 = S^2\times S^1$ and they will be denoted by $C^-_0$, $C^+_0$, $C^-_2$ and $C^+_2$ respectively; the last pair, corresponding with the turbulizations along the loops $\gamma^{\pm}_1$, are homeomorphic to $\partial(D^2\times S^1)\times S^1=T^3$ and will be denoted by $C_1^-$, $C_1^+$.

Let $(M_2,\FF_2)$ be the foliation obtained from the above process and by removing the interior leaves of the generalized Reeb components generated by the previous turbulizations.

Choose the orientations for the turbulizations carefully (see Figure~1) in order to have the following property: There exist six pairs of transverse arcs $\delta^{\pm}_{j,i}:[0,1]\to M_2$, $j=\pm 1$, $i=0,1,2$ such that $\delta^\pm_{- 1,i}(0)\in C^{\pm}_i$ (resp. $\delta^\pm_{+1,i}(0)\in C^\pm_i$) and  $\delta^\pm_{- 1,i}(1)\in S^2\times S^1\times\{-1\}$ (resp. $\delta^\pm_{+1,i}(1)\in S^2\times S^1\times\{+1\}$). 

For future reference, let $\delta^p_{\pm,2}:[0,1]\to M_2$ be another pair of pairwise disjoint transverse arcs (and pairwise disjoint with the defined in the previous steps) that connect $C_2^\pm$ with the boundary components $S^2\times S^1\times\{\pm 1\}$ respectively.

For the sake of readability, the traces of the previous arcs will be noted with bold letters, i.e., $\bm{\delta}$ will denote the trace of the path $\delta$.\\

\textbf{Step 3: Dimensional expansion and perturbation of compact leaves.}\\

Let $(M_3,\FF_3)$ be the foliation of $T^2\times M_2$ whose leaves are obtained from those of $\FF_2$ via cartesian product with $T^2$.

 Remove a tubular neighborhood of $\{\ast\}\times \bm{\delta}^p_{\pm,2}$ and attach to each one of the resulting transverse components a product foliation of the form $W_p\times J$, being $J$ an interval and $W_p$ a manifold with boundary obtained by removing a disk from a $5$-dimensional closed manifold $L_p$ with $\pi_1(L_p)=\Z_p$, $p$ odd. This new foliation will be denoted by $(M_4,\FF_4)$.
 
There are four pairs of compact leaves in $\FF_4$: $T^2\times C_0^\pm$, $T^2\times C_1^\pm$, $L_p\# (T^2\times C_2^\pm)$ and $L_p\# (T^2\times S^2\times S^1\times\{\pm 1\})$.\\

\textbf{Step 4: Tunneling.}\\

Let $a$ and $b$ be two simple loops in $T^2$ whose homotopy classes generate $\pi_1(T^2)$. Let $2b$ a simple loop in $T^2$ whose homotopy class agrees with $2[b]$. 

Let $N^-_{a,i}$, $N^-_{2b,i}$, $N^+_{a,i}$, $N^+_{2b,i}$ be pairwise disjoint open tubular neighborhoods of $\bm{a}\times \bm{\delta}^-_{-1,i}, \bm{2b}\times \bm{\delta}^-_{+1,i}, \bm{a}\times \bm{\delta}^+_{+1,i}$, $\bm{2b}\times \bm{\delta}^+_{-1,i}$, respectively, for $i=0,1,2$.


Let $(M_5,\FF_5)$ be the foliation obtained from $(M_4,\FF_4)$ by removing the above neighborhoods from $M_4$. The result of this tunneling process is the apparition of twelve transverse boundary components on $M_4$ which are homeomorphic to $S^3\times S^1\times \bm{\delta}^\star_{\ast 1,i}$, $i=0,1,2$ and $\ast,\star\in\{-,+\}$. The foliation $\FF_5$ has eight tangential boundary leaves:
\begin{itemize}
\item Two boundary leaves come from the leaves $L_p\#(T^2\times S^2\times S^1\times\{\pm 1\})$ of $\FF_4$, but now  each one of these leaves have six boundary components corresponding to the boundary of tubular neighborhoods of the loops $\bm{a}\times \{\delta^-_{- 1,i}(1)\}$, $\bm{2b}\times \delta^+_{-1,i}(1)$, $i=0,1,2$ in $L_p\#(T^2\times S^2\times S^1\times\{- 1\})$ and $\bm{a}\times \{\delta^+_{ +1,i}(1)\}$, $\bm{2b}\times \delta^-_{+1,i}(1)$, $i=0,1,2$, in $L_p\#(T^2\times S^2\times S^1\times\{+ 1\})$. These two boundary leaves will be noted as $K_-$ and $K_+$.

\item Three pairs of compact boundary leaves come from the leaves of $\FF_4$ denoted as $T^2\times C^-_i$ and $T^2\times C^+_i$  for $i = 0,1$ and $L_p\# (T_2\times C^-_2)$, $L_p\# (T_2\times C^+_2)$. Each one of these compact leaves have two boundary components corresponding to the boundary of tubular neighborhoods of the loops $\bm{a}\times \delta^-_{-1,i}(0)$, $\bm{2b}\times \{\delta^-_{+1,i}(0)\}$, for the case of $C^-_i$, and $\bm{a}\times \{\delta^+_{+1,i}(0)\}$, $\bm{2b}\times \{\delta^+_{-1,i}(0)\}$ for the case of $C^+_i$, $i=0,1,2$. These pairs of boundary leaves of $\FF_4$ will be noted as $V_i^-$ and $V_i^+$ for $i=0,1,2$.
\end{itemize}
Observe that $V_i^-$ and $V^+_i$  are homeomorphic for $i = 0,1,2$ and $\pi_1(V^\pm_0)=\Z^3$, $\pi_1(V_1^\pm)=\Z^5$, $\pi_1(V_2^\pm) = \Z_p\ast\Z_3$. It follows that $V_0^\pm, V_1^\pm$ are homeomorphic to fundamental blocks $V_0$ and $V_1$ of rank $3$ and $5$ respectively. In a similar way $V_2^\pm$ are homeomorphic to $L_p\# V_0^\pm$ and therefore these are homeomorphic to a perturbed block $V_2$ of rank $3$.\\

\textbf{Step 5: The bridges}\\

Let $\GG_i$ be the foliation of $V_i\times [-1,1]$ foliated as a product, $i=0,1,2$. Let $(M_6,\FF_6)$ be the foliation obtained from $\FF_5$ by attaching the foliations $\GG_i$ via the identification of tangential boundary leaves $V_i^-$ with $V_i\times\{-1\}$ and $V_i^+$ with $V_i\times\{+1\}$ for $i=0,1,2$. The foliation $\FF_6$ has two tangential boundary leaves ($K_-$ and $K_+$) that can be also attached, the homeomorphism used to attach these boundary leaves will identify boundary components as follows.

Let $\partial K^+_{a,i}$, $\partial K^+_{2b,i}$, be the boundary components of $K^+$ which correspond, respectively, with the boundaries of the removed tubular neighborhoods of $\bm{a}\times\{\delta^+_{+1,i}(1)\}$, $\bm{2b}\times\{\delta^-_{+1,i}(1)\}$, for $i=1,2,3$. 

Simlarly define $\partial K^-_{a,i}$, $\partial K^-_{2b,i}$, $i=0,1,2$, as the respective boundary components of $K_-$ which correspond, respectively, with the boundaries of the tubular neighborhoods of $\bm{a}\times\{\delta^-_{-1,i}(1)\}$, $\bm{2b}\times\{\delta^+_{-1,i}(1)\}$, $i=0,1,2$.

Let $f:K_-\to K_+$ be a homeomorphism that satisfies $f(\partial K^+_{a,0}) = \partial K^-_{a,1}$, $f(\partial K^+_{a,1}) = \partial K^-_{a,2}$, $f(\partial K^+_{a,2}) = \partial K^-_{a,0}$, $f(\partial K^-_{2b,0}) = \partial K^+_{2b,1}$, $f(\partial K^-_{2b,1}) = \partial K^+_{2b,2}$ and $f(\partial K^-_{2b,2}) = \partial K^+_{2b,0}$.

Let $(M,\GG)$ be the foliation obtained from $(M_5,\FF_5)$ by identifying the tangential boundary leaves $K_-$ and $K_+$ via the homeomorphism $f$. The resulting foliation has two transverse boundary components that satisfy the conditions of Lemma~\ref{l:core_foliation}. 



In order to see this, let $x,y\in T^2$ different points. Set $x^j_i = (x,\delta^j_{j,i}(0))$ and $y^j_i = (y,\delta^j_{-j,i}(1))$ for $i\in\{0,1,2\}$ and $j\in\{-,+\}$ (check Figure~2). These points belonged to $M_3$ but without loss of generality they can be assumed to belong to $M_5$. Moreover, assume that $x^\pm_i, y^\pm_i\in \partial V_i^\pm$ for $i=0,1,2$, where $x^\pm_i$ belong to the boundary component obtained by removing a tubular neighborhood of $\bm{a}$ and $y^\pm_i$ to the other component. Let $x_i, y_i\in V_i$ denote the points in $V_i$ so that $x_i^\pm$, $y_i^\pm$ are attached to $(x_i,\pm 1)$ and $(y_i,\pm 1)$ respectively in the bridge foliations.


Let us consider the subindices $\{0,1,2\}\equiv\Z_3$, hence $2+1 = 0 (\text{mod} 3)$. Without loss of generality, assume that $f((x,\delta^j_{j,i}(1)))=(x,\delta^{-j}_{-j,i+1}(1))$ and $f((y,\delta^j_{-j,i}(1)))=(y,\delta^{-j}_{j,i+1}(1))$ for $i\in\Z_3$ and $j\in\{-,+\}$. Let us also denote the arcs $\{x_i\}\times [-1,1]$ in $\GG_i$ by $I_i^-$, $i=0,1,2$, and, similarly, the arcs $\{y_i\}\times [-1,1]$ will be denoted as $I_i^+$ for $i=0,1,2$ respectively.

There are two transverse boundary components in $\GG$, let us call them $B_-$ and $B_+$ . Each one of these componentes have a transverse circle, denoted by $S_-$ and $S_+$ respectively, these are obtained by attaching the transverse arcs: 
\begin{align*}
S_- &=(\{x\}\times\bm{\delta}^-_{-1,0})\cup I_0^- \cup (\{x\}\times\bm{\delta}^+_{+1,0}) \cup (\{x\}\times \bm{\delta}^-_{-1,1}) \cup\\
& I_1^- \cup
 (\{x\}\times \bm{\delta}^+_{+1,1})\cup (\{x\}\times \bm{\delta}^-_{-1,2})\cup I_2^-\cup (\{x\}\times\bm{\delta}^+_{+1,2})\;,
\end{align*}
\begin{align*}
S_+ &=(\{y\}\times\bm{\delta}^-_{+1,0})\cup I_0^+ \cup (\{y\}\times\bm{\delta}^+_{-1,0}) \cup (\{y\}\times \delta^-_{+1,1}) \cup\\
& I_1^+ \cup
 (\{y\}\times \bm{\delta}^+_{-1,1})\cup (\{y\}\times \bm{\delta}^-_{+1,2})\cup I_2^+\cup (\{y\}\times\bm{\delta}^+_{-1,2})\;,
\end{align*}

The transverse arcs $\{x_i\}\times [-1,1]$ (resp. $\{y_i\}\times [-1,1]$) are complete transversals of the bridge foliations $\GG_0$, $\GG_1$ and $\GG_2$ respectively and they play the role of $I_i^-$ (resp. $I^+_i$) in the estatement of Lemma~\ref{l:core_foliation}.

\end{proof}

\begin{figure}[h]
\begin{center}
\includegraphics[scale=0.3]{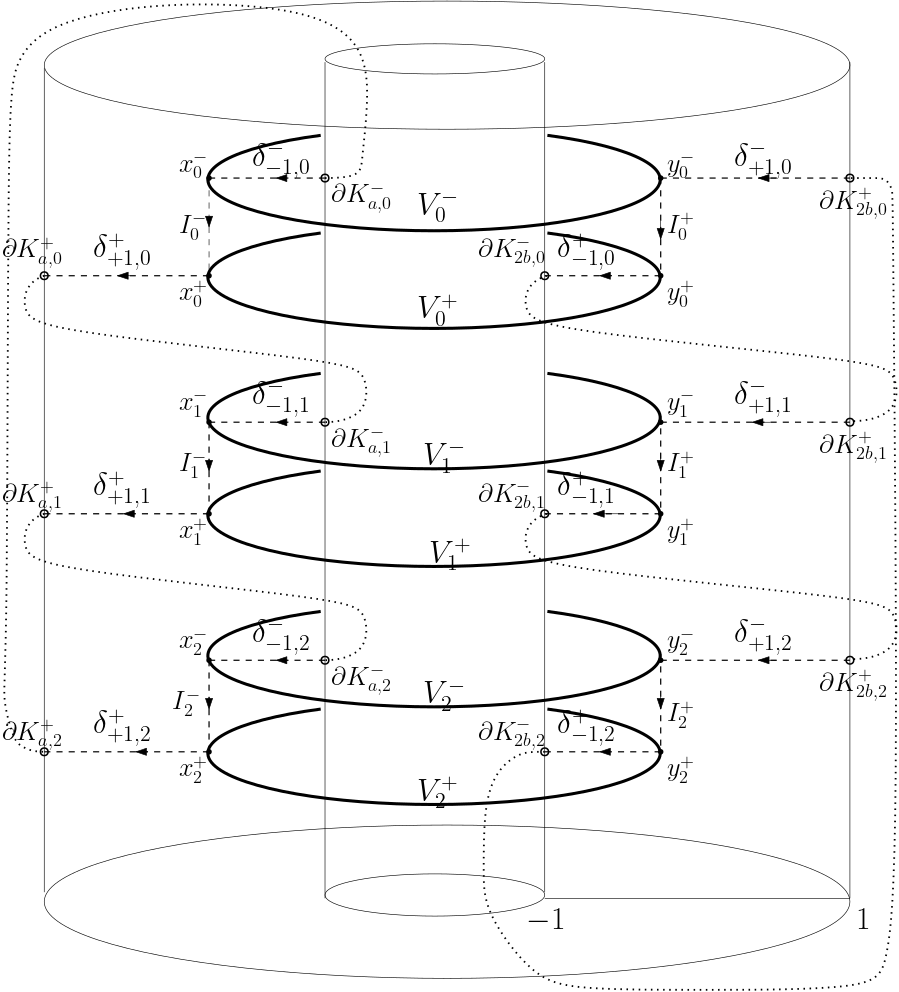}
\caption{The elements of the foliation $\FF_6$ (the vertical dimension must be interpreted as a $T^2\times S^2$ factor at this step). Bold circles represent closed leaves obtained by turbulization and tunneling. The dotted arcs join boundary components of the exterior compact leaves (external and internal cylinders) indentified by the homeomorphism $f$. The transverse circles $S_-$ and $S_+$ can be seen depicted to the right and to the left respectively (dashed lines).}
\end{center}
\end{figure}

\begin{proof}[Proof of Proposition~\ref{p:good foliation}]
Let us consider the foliation $\GG$ given by Lemma~\ref{l:core_foliation}, let $B_-$ and $B_+$ be its transverse boundary components and let $S_-$ and $S_+$ be complete transversals of the trace foliations on $B_-$ and $B_+$. Choose parametrizations of $S_-$ and $S_+$ that identify $I_i^\pm\subset S^\pm$ with the same interval $I_i\subset S^1$, for $i=0,1,2$. Let $\varphi:S^1\to S^1$ be a Denjoy diffeomorphism (that can be chosen of regularity $C^{1+\alpha}$ for any $\alpha\in [0,1)$, see \cite{Herman,Navas}) such that its minimal set $\Lambda$ is included in the interior of $I_0\cup I_1$ and, moreover, $\Lambda\cap I_0\neq \emptyset\neq \Lambda\cap I_1$. We can also assume, for the sake of simplicity, that $\varphi$ acts transitively on the gaps of $\Lambda$. 

Under the above condition, the complement of $I_0\cup I_1$ is included in just two gaps of $\Lambda$, one of the gaps includes $I_2$ and there exists $k\in\Z^\ast$ such that $\varphi^k(I_2)$ is included in the other gap. Let us assume that the extreme points of $\varphi^k(I_2)$ belong to $I_0\cup I_1$. This extra condition is easy to achieve from a Denjoy diffeomorphism by applying a suitable conjugation if necessary, it will be referred as the ``gap condition''.

Let $\FF_\varphi$ be the foliation obtained from $\GG$ by identifying the transverse boundary components  under the homeomorphism $$\Phi: B_-\equiv (S^3\times S^1)\times S_- \to B_+\equiv (S^3\times S^1)\times S_+\;,\ \Phi(x,z)=(x,\varphi(z))\;.$$

The saturation of $\Lambda$ in $\FF_\varphi$ defines an exceptional minimal set and their leaves are Ghys manifolds. This follows from the fact that $\Lambda\subset I_0\cup I_1$ and the saturations of the arcs $I_0$ and $I_1$ consists of the bridge foliations $\GG_0$ and $\GG_1$ that are product foliations with the Ghys blocks $V_0$ and $V_1$ as leaves respectively. The ataching map $\Phi$ therefore attachs a fundamental block with another following a $\varphi$-orbit in $\Lambda$, the Ghys block will be $V_0$ when the point belongs to $I_0$ and $V_1$ if it belongs to $I_1$.

Since $\Lambda\cap I_0\neq \emptyset\neq \Lambda\cap I_1$, every leaf meeting $\Lambda$ is, in fact, a nonperiodic Ghys manifold. An easy way to see this is assuming, {\em ad absurdum}, that some of these leaves is a periodic Ghys manifold. If this were the case, then it should exist some $N\in\N$, $x\in\Lambda$ and $\xi\in\{0,1\}$ such that  $\varphi^{N\cdot n}(x)\in I_{\xi}$ for all $n\in\Z$. But $\varphi^N$ has the same minimal set as $\varphi$, this is consequence of the fact that any irrational rotation is dense in the circle and $\varphi$ is semiconjugated with an irrational rotation of the circle, therefore the $\varphi^N$-orbit of $x$ must meet both $I_0$ and $I_1$ and we reach a contradiction.

Recall that $\varphi$ acts transitively in the gaps of $\Lambda$, the $\varphi$-orbit of a point in a gap of $\Lambda$ meets each gap just once. Let $x$ be an extreme point of $I_2$. The ``gap condition'' on $\varphi$ implies that $\varphi^n(x)\in I_0\cup I_1$ for all $n\neq 0$, therefore the leaf passing by $x$ is obtained as a union of Ghys blocks $V_0$ and $V_1$ and exactly one perturbed block $V_2$. This leaf is a perturbed Ghys manifold which is not forward nor backward periodic.
\end{proof}

\section{Repetitive leaves}

Our previous construction is rather flexible and hence it is possible to realize lots of different (perturbed) nonperiodic Ghys manifolds as leaves of codimension one foliations. In this section we show that this construction always leads to the apparition of nonperiodic leaves which are (forward and backward) repetitive.

Repetitiveness is a useful concept that comes from tiling theory. In our context we are only interested in the case of (bi)sequences $\omega\in\{0,1\}^\Z$. A {\em word} of a (bi)sequence is a finite and ordered list of consecutive elements, a $k$-word is a word with $k$ elements.

\begin{defn}[Repetitive (bi)sequences]
A (bi)sequence is called repetitive if for every $k\in\N$ there exists $\ell_k\in\N$ such that any $k$-word of the sequence is contained in every $\ell_k$-word.
\end{defn}

\begin{defn}[Forward and backward repetitive (bi)sequence]
A sequence $\omega\in\{0,1\}^\Z$ is called forward (resp. backward) repetitive if there exists $N>0$ such that the sequences $(\omega(n))_{n>N}$ and $(\omega(n))_{n<-N}$ are both repetitive.
\end{defn}

The next Proposition is a consequence of a well known fact on sturmian sequences. We include a proof for completeness.

\begin{prop}\label{p:repetitive leaves}
For any choice of the Denjoy homeomorphism $\varphi$ the leaves of $\FF_\varphi$ that meets the minimal set $\Lambda$ are (forward and backward) nonperiodic Ghys manifolds which are repetitive.
\end{prop}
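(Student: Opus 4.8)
The plan is to translate the statement into symbolic dynamics: the combinatorial type of a leaf meeting $\LB$ is a point of a \emph{minimal} subshift, and every point of a minimal subshift is repetitive. First I would fix the dictionary between such leaves and their codings. A leaf $L$ of $\FF_\varphi$ that meets $\LB$ does so along the $\varphi$-orbit $\{\varphi^n(x)\}_{n\in\Z}$ of some $x\in\LB$, and by the construction in the proof of Proposition~\ref{p:good foliation} this leaf is the Ghys manifold $M_{\omega_x}$, where $\omega_x\in\{0,1\}^\Z$ is defined by $\omega_x(n)=j\iff\varphi^n(x)\in I_j$, $j\in\{0,1\}$. This is well defined: $I_0$ and $I_1$ are disjoint closed arcs and $\LB\subset\intr(I_0\cup I_1)=\intr(I_0)\cup\intr(I_1)$, so $\LB\cap I_0$ and $\LB\cap I_1$ form a clopen partition of $\LB$; since the orbit of $x$ stays in $\LB$, it never enters $I_2$, so $\omega_x(n)$ is always defined and $\omega_x$ codes a genuine (unperturbed) Ghys manifold. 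That these leaves are forward and backward nonperiodic Ghys manifolds was already shown in the proof of Proposition~\ref{p:good foliation}, so only the repetitiveness of $\omega_x$ remains.

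Next I would package the coding as a factor map. The map $c\colon\LB\to\{0,1\}^\Z$, $c(x)=\omega_x$, is continuous (because $\LB\cap I_0$ and $\LB\cap I_1$ are clopen in $\LB$) and satisfies $c\circ\varphi=\sg\circ c$, where $\sg$ is the shift. Since $\LB$ is the exceptional minimal set of $\varphi$, the system $(\LB,\varphi|_\LB)$ is minimal, hence $Y:=c(\LB)$ is a minimal subshift: any nonempty closed $\sg$-invariant $Z\subseteq Y$ has $c^{-1}(Z)$ a nonempty closed $\varphi$-invariant subset of $\LB$, so $c^{-1}(Z)=\LB$ and $Z=Y$. (Via the semiconjugacy of $\varphi$ to an irrational rotation one checks that each $\omega_x$ is a Sturmian, i.e. rotation, sequence, and minimality of the associated subshift is the classical fact referred to above; but the factor-map argument just given does not need this identification.)

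Finally I would invoke the standard fact that every point of a minimal subshift is repetitive. Fix $k\in\N$; there are finitely many $k$-words occurring in $Y$, and for each such word $w$ the cylinder $C_w:=\{\eta\in Y:\eta(0)\cdots\eta(k-1)=w\}$ is a nonempty clopen subset of the compact space $Y$. By minimality $\{\sg^{-i}C_w\}_{i\geq 0}$ is an open cover of $Y$, so there is $L_w\in\N$ with $Y=\bigcup_{i=0}^{L_w}\sg^{-i}C_w$. Put $\ell_k:=k+\max_w L_w$. Then for any $\eta\in Y$, any $n\in\Z$ and any $k$-word $w$ of $Y$, one has $\sg^{n}\eta\in\sg^{-i}C_w$ for some $0\leq i\leq L_w$, i.e. $w$ occurs in $\eta$ inside the window $[n,n+\ell_k-1]$; hence $\eta$ is repetitive with modulus $k\mapsto\ell_k$. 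In particular $\omega_x$ is repetitive. Restricting to one-sided tails preserves repetitiveness with the same modulus (every $k$-word of a tail is a $k$-word of $\omega_x$, hence occurs in every $\ell_k$-window of $\omega_x$, in particular in every $\ell_k$-window contained in the tail), so $\omega_x$ is forward and backward repetitive, and therefore the leaf $M_{\omega_x}$ is a forward and backward repetitive nonperiodic Ghys manifold.

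The proof is essentially all bookkeeping; the only place where the specific hypotheses on $\FF_\varphi$ enter is the clopen-ness of $\LB\cap I_0$ and $\LB\cap I_1$, which both makes $c$ continuous and forces the codings to stay out of $I_2$, and this is exactly why $\varphi$ was chosen with $\LB\subset\intr(I_0\cup I_1)$ and $\LB\cap I_0\neq\emptyset\neq\LB\cap I_1$. I do not expect a real obstacle beyond this: once $(\LB,\varphi)$ is seen to be minimal, repetitiveness is forced, since for subshifts ``minimal'' and ``repetitive'' coincide. If one prefers to stay literally within the world of Sturmian sequences, the same conclusion follows from linear recurrence of rotation sequences, but the route above avoids having to transport the interval partition of $\LB$ to an arc partition of the circle through the semiconjugacy.
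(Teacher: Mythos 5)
Your proof is correct, but it takes a genuinely different route from the paper. The paper's own argument is a two-line appeal to symbolic dynamics of rotations: the coding sequences of leaves meeting $\Lambda$ are identified as (recurrent) Sturmian sequences via Morse--Hedlund, and repetitiveness is then quoted as a classical property of Sturmian sequences. You instead bypass the Sturmian identification entirely: you set up the coding $c\colon\Lambda\to\{0,1\}^{\Z}$ as a continuous, shift-equivariant factor map (using that $\Lambda\cap I_0$ and $\Lambda\cap I_1$ are clopen in $\Lambda$, which is exactly the point where the hypotheses on $\varphi$ enter), observe that the image is a minimal subshift because $(\Lambda,\varphi|_\Lambda)$ is minimal, and then run the standard compactness argument with cylinder sets to get uniform recurrence, hence repetitiveness, of every point of a minimal subshift. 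Your argument is more self-contained and strictly more general --- it would apply verbatim to the modified constructions in the later remarks (concatenations, extra bridges, more symbols), where the codings are no longer Sturmian --- whereas the paper's route buys the explicit identification of the sequences as Sturmian, which the author reuses afterwards (the complexity discussion and the proposition that the leaves of the minimal set are pairwise nonhomeomorphic both lean on the Sturmian/coding-by-rotation structure). Both proofs handle nonperiodicity the same way, by deferring to the argument already given in the proof of Proposition~\ref{p:good foliation}.
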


\begin{proof}
The sequences associated to the leaves that meet $\Lambda$ are examples of sequences obtained by a coding under a rotation. Since the rotation number is irrational, the sequences are recurrent sturmian sequences (see \cite{Morse-Hedlund}) and it is well known that they are forward and backward repetitive.
\end{proof}

\begin{cor}\label{c:almost periodic proper leaves}
For any choice of the Denjoy homeomorphism $\varphi$, the proper leaves of $\FF^p_\varphi$ which are (perturbed) Ghys manifolds are forward and backward repetitive.
\end{cor}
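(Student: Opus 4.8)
The plan is to reduce Corollary~\ref{c:almost periodic proper leaves} to the Sturmian coding argument already used in the proof of Proposition~\ref{p:repetitive leaves}, after isolating which leaves of $\FF^p_\varphi$ are proper (perturbed) Ghys manifolds and checking that passing to the proper case modifies the associated sequence only at finitely many indices. First I would recall, from the construction in the proof of Proposition~\ref{p:good foliation}, that a leaf of $\FF^p_\varphi$ meeting the transversal circle $S^1\supset I_0\cup I_1\cup I_2$ on which $\varphi$ acts, say at a point $z$, is a (perturbed) Ghys manifold exactly when the whole orbit $\{\varphi^n(z)\}_{n\in\Z}$ stays inside $I_0\cup I_1\cup I_2$, and then its associated sequence $\lambda$ is the coding $\lambda(n)=\xi\iff\varphi^n(z)\in I_\xi$. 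If $z\in\Lambda$ the orbit never leaves $\Lambda\subset I_0\cup I_1$ and the leaf accumulates on itself (every $\varphi$-orbit in the exceptional minimal set is dense in it), so it is not proper; hence a \emph{proper} (perturbed) Ghys leaf corresponds to a point $z$ lying in a gap of $\Lambda$. Since $\varphi$ acts transitively on the gaps, the orbit of such a $z$ meets each gap exactly once, and by the ``gap condition'' only the gap containing $I_2$ and the gap $\varphi^k(I_2)$ fail to be contained in $I_0\cup I_1$. Therefore $\varphi^n(z)\in I_0\cup I_1$ for all but finitely many $n$, at most one index of $\lambda$ carries the perturbed block $V_2$, and $\lambda$ agrees with a sequence over $\{0,1\}$ outside a finite set of indices.

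Next I would compare $\lambda$ with a rotation coding. The Denjoy homeomorphism $\varphi$ is semiconjugate to the irrational rotation $R_\theta$, $\theta$ being its rotation number, by a monotone degree-one map $h\colon S^1\to S^1$ collapsing the closure of each gap of $\Lambda$ to a point. Since $\Lambda\subset I_0\cup I_1$, $\Lambda\cap I_0\neq\emptyset\neq\Lambda\cap I_1$, and $S^1\setminus(I_0\cup I_1)$ is contained in exactly two gaps, the arcs $J_0=h(I_0)$ and $J_1=h(I_1)$ cover $S^1$ and overlap only at their endpoints. Hence, choosing $N$ larger than all the finitely many exceptional indices, the tails $(\lambda(n))_{n>N}$ and $(\lambda(n))_{n<-N}$ coincide with codings of the $R_\theta$-orbit of $h(z)$ with respect to the two-arc partition $\{J_0,J_1\}$. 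Such codings are Sturmian sequences, hence uniformly recurrent (see \cite{Morse-Hedlund}), i.e.\ repetitive.

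Finally, forward and backward repetitiveness of $\lambda$ depend only on the tails $(\lambda(n))_{n>N}$ and $(\lambda(n))_{n<-N}$, so the finitely many exceptional terms — in particular the single one recording the perturbed block — do not affect the conclusion, and the corollary follows. The main obstacle is the first step: one must check carefully, from the explicit gluings of Lemma~\ref{l:core_foliation} together with the gap condition, that the $\varphi$-orbit of a point whose leaf is a proper (perturbed) Ghys manifold really does lie in $I_0\cup I_1$ off a finite set and codes there exactly as a rotation orbit; once this is granted, the statement is immediate from the classical Sturmian recurrence already invoked in Proposition~\ref{p:repetitive leaves}.
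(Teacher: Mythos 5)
Your proposal is correct and takes essentially the same route as the paper: the paper's proof simply notes that the associated sequence of a proper (perturbed) Ghys leaf coincides, except for the replacement of one block, with the associated sequence of a leaf meeting the minimal set, and then invokes Proposition~\ref{p:repetitive leaves}. You have merely made explicit the details the paper leaves implicit --- that proper (perturbed) Ghys leaves correspond to gap points, that the gap condition confines the orbit to $I_0\cup I_1$ off a finite set, and that finitely many modifications do not affect forward or backward repetitiveness.
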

\begin{proof}
The perturbed Ghys manifold which is realized as a proper leaf has an associated sequence that is in correspondence with an associated sequence to some leaf that meets the minimal set, with the possibly difference that at some point a Ghys block is substituted with a perturbed Ghys block (not necessarily associated to the same simbol $0$ or $1$). The result follows from Proposition~\ref{p:repetitive leaves}.
\end{proof}

\subsection{Realizable sequences and obstructions}

It is not difficult to make an explicit description of the repetitive sequences obtained by our construction, this ultimately depends on the continuous fraction expansion of $\theta$ and the relative position of $\Lambda$ respect to $I_0$ and $I_1$ (more precisely, in the pair of gaps that are not covered by $I_0\cup I_1$). 

If the pair of gaps $J_0,J_1$ which are not included in $I_0\cup I_1$ satisfy $\varphi(J_0)=J_1$, then the sequences realized in our construction are Sturmian sequences. The $n$-complexity of a sequence $\omega$ is the number of different $n$-words in the sequence. A sequence $\omega$ is periodic if and only if its $n$-complexity $\leq n$ for all $n$. A sequence is Sturmian if and only if it is recurrent and its $n$-complexity is $n+1$ for all $n$. Every Sturmian sequence can be obtained by a coding given by an irrational rotation $R_\theta$, with rotation number $\theta$, over the partitions $[0,\theta), [\theta,1)$ or $(0,\theta], (\theta,1]$ of $\R/\Z$ \cite{Morse-Hedlund}, and these are precisely the sequences obtained (up to shift or symbol inversion) when $\varphi(J_0)=J_1$. 

More mileage can be obtained from this relation. The next proposition solves, in the positive, an open question given in \cite[pp. 399]{Ghys1995}.

\begin{prop}
Let $\varphi:S^1\to S^1$ be a Denjoy homeomorphism and let $\Lambda$ be its exceptional minimal set. Then the leaves of the minimal set of the foliation $\FF_\varphi$ are pairwise nonhomoemorphic.
\end{prop}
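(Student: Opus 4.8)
The plan is to reduce the statement to a purely symbolic-dynamical fact about the sequences $\omega$ that are realized by leaves of $\FF_\varphi$, and then invoke the topological rigidity of Ghys manifolds (the equivalence $M_\omega\cong M_{\omega'}\iff \sigma^k(\omega)=\omega'$ for some $k\in\Z$, recalled in the excerpt after the definition of Ghys manifold). Concretely, a leaf of $\FF_\varphi$ meeting $\Lambda$ is $M_\omega$ where $\omega\in\{0,1\}^\Z$ is the coding of a $\varphi$-orbit $(\varphi^n(x))_{n\in\Z}$, $x\in\Lambda$, with respect to the partition $\{I_0\cap\Lambda,\ I_1\cap\Lambda\}$ of $\Lambda$ (leaves in gaps are proper, not in the minimal set, so they are excluded here). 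So the proposition follows once I show: \emph{distinct $\varphi$-orbits in $\Lambda$ give sequences that lie in distinct $\sigma$-orbits.} Since $\varphi$ is semiconjugate via a monotone surjection $h\colon S^1\to S^1$ to an irrational rotation $R_\theta$ (Poincaré's theorem for Denjoy maps), and $h$ collapses exactly the closures of the gaps of $\Lambda$, the coding of a point $x\in\Lambda$ essentially records the rotation itinerary of $h(x)$. The shift acts on codings by $\sigma$, and matches $R_\theta$ on the base; thus two codings are in the same $\sigma$-orbit iff the corresponding rotation points differ by an iterate of $R_\theta$, i.e. lie in the same $R_\theta$-orbit.

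Then I would carry out the argument in the following steps. First, I would make precise the coding: for $x\in\Lambda$, set $\omega_x(n)=0$ if $\varphi^n(x)\in I_0$ and $\omega_x(n)=1$ if $\varphi^n(x)\in I_1$; this is well defined for $x\in\Lambda$ since $\Lambda\subset \intr(I_0\cup I_1)$ and $\Lambda$ is $\varphi$-invariant, and one checks $\omega_{\varphi(x)}=\sigma(\omega_x)$. Second, I would record the compatibility of $h$ with the partition: the endpoints of $h(I_0\cap\Lambda)$ and $h(I_1\cap\Lambda)$ are a pair of points of $S^1$ whose $R_\theta$-orbit is dense, so (up to the usual half-open convention at countably many points) $\omega_x$ is the Sturmian-type coding of $h(x)$ under $R_\theta$ with respect to this two-interval partition. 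Third — the crux — I would prove that for $x,y\in\Lambda$, if $\sigma^k(\omega_x)=\omega_y$ for some $k$, then $h(y)=R_\theta^k(h(x))$; this uses that the rotation coding is \emph{injective up to orbit}: two points of $S^1$ with the same $R_\theta$-itinerary coincide, because the itinerary determines the point as the unique intersection of the nested intervals $\bigcap_{n}R_\theta^{-n}(\overline{J_{\omega(n)}})$, whose lengths $\to 0$ by minimality of $R_\theta$. Hence $h(x),h(y)$ are in the same $R_\theta$-orbit. Fourth, I would lift this back: since $h$ is a monotone semiconjugacy that is injective on $\Lambda$ except that it identifies the two endpoints of each gap, and since the Ghys-manifold rigidity only needs $\omega_x,\omega_y$ to be shift-equivalent (not the points equal), I get $M_{\omega_x}\cong M_{\omega_y}$ forces $\sigma^k(\omega_x)=\omega_y$, hence $h(x),h(y)$ on one $R_\theta$-orbit, hence $x,y$ on one $\varphi$-orbit (using injectivity of $h$ on $\Lambda$ up to gap-endpoint identification, and that gap endpoints lie in a single $\varphi$-orbit within $\Lambda$); but different orbits were assumed, contradiction. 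Conversely two points on the same $\varphi$-orbit obviously give the same leaf, so the leaves of the minimal set, as $x$ ranges over a transversal to the $\varphi$-action on $\Lambda$, are pairwise nonhomeomorphic.

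I expect the main obstacle to be Step 3 together with the bookkeeping at the countably many ``boundary'' points — namely, handling the ambiguity in the coding at the forward and backward orbits of the partition endpoints (equivalently, the gap endpoints of $\Lambda$), and checking that this ambiguity never merges two genuinely different $\varphi$-orbits in $\Lambda$ into one $\sigma$-orbit of sequences. The clean way is to note that each gap of $\Lambda$ is collapsed by $h$ to a single point, its two endpoints lie on a common $\varphi$-orbit, and their codings then differ only in finitely many places shifted by the gap's ``length'' in orbit-combinatorics — in fact they are shift-equivalent to each other exactly as expected — so no new identifications appear; the transitivity of $\varphi$ on gaps (a harmless normalization already assumed in the construction) makes this entirely explicit. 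Once that is dispatched, the argument is just the standard fact that the Sturmian (rotation) coding separates orbits, combined with the Ghys rigidity theorem.
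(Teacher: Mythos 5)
Your overall strategy coincides with the paper's: code each leaf of the minimal set by a bi-sequence, show that distinct $\varphi$-orbits in $\Lambda$ produce sequences in distinct $\sigma$-orbits, and conclude with Ghys's rigidity theorem ($M_\omega\cong M_{\omega'}$ iff $\omega,\omega'$ are shift-equivalent); the paper simply outsources the symbolic step to Masui's result that the coding conjugates $(\Lambda,\varphi|_\Lambda)$ to a Sturmian subshift. However, the step you yourself single out as the main obstacle --- the bookkeeping at gap endpoints --- is resolved by two claims that are false. First, the two endpoints of a gap of $\Lambda$ do \emph{not} lie on a single $\varphi$-orbit: an orientation-preserving homeomorphism sends left endpoints of gaps to left endpoints and right endpoints to right endpoints, so each gap orbit carries \emph{two} distinct $\varphi$-orbits in $\Lambda$, both collapsed by the semiconjugacy $h$ onto a single $R_\theta$-orbit. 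Hence your implication ``$h(x),h(y)$ in one $R_\theta$-orbit $\Rightarrow$ $x,y$ in one $\varphi$-orbit'' fails precisely at these pairs, which are exactly the pairs where injectivity of the coding is at stake. Second, the codings of the two endpoints of a gap are \emph{not} shift-equivalent: if they were, Ghys rigidity would force the two corresponding leaves (which are distinct leaves of the minimal set) to be homeomorphic, contradicting the very proposition you are proving.

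The gap is repairable, and the repair is what actually makes the coding a conjugation rather than a mere semiconjugation. Since $S^1\setminus(I_0\cup I_1)$ is contained in two gaps, each of which has one endpoint in $I_0$ and the other in $I_1$, and since the orbit of any gap visits these two straddling gaps, the codings $\omega_a,\omega_b$ of the two endpoints $a,b$ of a gap agree outside a finite set of indices but disagree at least once. If $\sigma^k(\omega_a)=\omega_b$ with $k=0$ this contradicts the disagreement; if $k\neq 0$, then $\omega_a(n+k)=\omega_a(n)$ for all large $|n|$, so $\omega_a$ would be forward and backward eventually periodic, which is excluded because the rotation number is irrational (the same nonperiodicity argument as in the realization proposition). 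Therefore the two endpoint orbits yield shift-inequivalent codings, no spurious identifications occur, and your Steps 3--4 then close the argument as intended. As written, though, the proposal's treatment of this point would not survive scrutiny, so this must be counted as a genuine (if localized) error rather than mere bookkeeping.
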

\begin{proof}
The coding of the sequences associated to the Ghys manifolds that are leaves in $\str(\Lambda)$ induces a conjugation between de Denjoy system $(\Lambda,\varphi_\Lambda)$ and a sturmian sufshift (as it is done in \cite[Proposition 3]{Masui}). Henceforth, the leaves of the minimal set of the foliation $\FF_\varphi$ are pairwise nonhomeomorphic since they are Ghys manifolds associated to sequences in different orbits by the shift action.
\end{proof}

When $\varphi^k(J_0)=J_1$ for $k\neq \pm 1$ then the sequence obtained does not need to be sturmian but still can be described in terms of the difference of two sturmian sequences (associated to the rotations $R_\theta$ and $R_{k\theta}$) \cite{Hubert}.

The repetitive (bi)sequences that can be realized as associated sequences of a Ghys leaf of some $\FF_\varphi$ have some natural restrictions. Recall from the proof of  Proposition~\ref{p:repetitive leaves} that the sequence associated to the realized Ghys manifold depends on the cardinal function $\chi$ over a semiopen arc $\Lambda_1$ applied through an orbit of an irrational rotation. Since irrational rotations induce an uniquely ergodic system it follows that the Birkhoff sums must converge to the Lebesgue measure of this interval. The extreme point of the interval must belong, by construction, to the same orbit under the irrational rotation, thus $|\Lambda_1| = k\cdot\theta (\text{mod}\ 1)\ \text{or}\ 1-k\cdot\theta (\text{mod}\ 1)$ for some $k\in\Z$.

Thus, if $\omega_\varphi$ is the (bi)sequence associated to a Ghys manifold realized as a leaf of $\FF_\varphi$, then it  must satisfy that:
$$
\lim\limits_{n\to\pm\infty}\dfrac{1}{n}\sum_{i=0}^n \omega_\varphi(i) = k\cdot\theta (\text{mod}\ 1)\ \text{or}\ 1-k\cdot\theta (\text{mod}\ 1)
$$

The previous sum can be also interpreted as the frequency of the symbol $1$ in the sequence. It is not hard to find nonperiodic repetitive sequences where the average of symbols converge to a rational number, for instance, the frequency of symbols in the Thue-Morse sequence converge to $1/2$. Moreover, there are repetitive sequences where the frequency of a symbol is not well defined. This kind of repetitive sequences cannot be associated to Ghys manifolds realized by our construction. Moreover, a (bi)sequence that is forward repetitive, forward nonperiodic but backward periodic cannot be handled by our construction.

\subsection{Simple modifications}

The familiy of sequences associated to Ghys manifolds realizable as leaves can be increased if we allow some simple modifications in our original construction. In the next three remarks we birefly explain three different kinds of modifications.

\begin{rem}[Concatenation of foliations]
Let $\GG$ be the foliation defined in Lemma 3.2. And let $\GG'$ be a copy of that foliation. Let $S_-',S_+'$ be the corresponding transverse circles in $\GG'$. Let $I_0',I_1'\in S^1$ be the corresponding intervals in a common parametrization of $S_-$ and $S_+$ whose saturations are product foliations with the fundamental blocks $V_0$ and $V_1$.

Up to a circle reparametrization $I_0'$ and $I_1'$ can be identified with arbitrary disjoint intervals in $S^1$. Let $\varphi:S^1\to S^1$ be a $C^r$ Denjoy diffeomorphism and $\Lambda$ its minimal set. A {\em concatenation} of $\GG$ and $\GG'$ is the foliation obtained by a transverse gluing of $\GG$ with $\GG'$ so that $S_+$ is identified with $S_-'$ via the identity map (relative to the given circle parametrizations) and $S_+$ is identified with $S_-'$ via $\varphi$. The concatenation is called {\em coherent} if $\Lambda\subset (I_0\cup I_1)\cap (I_0'\cup I_1')$.

The leaves of the minimal set in a coherent concatenation are still Ghys manifolds. Set $I_{ij}=I_i\cap I_j'$ for $i,j\in\{0,1\}$. The associated sequence of a leaf passing through a point $x\in\Lambda$ depends in the $\varphi$-orbit of $x$: the word $ij$ is added to the sequence whenever $\varphi^k(x)\in I_{ij}$. These sequences are generalizations of Sturmian sequences. In fact if the words $ij$, $i,j\in\{0,1\}$, are substituted by distinct symbols, then the sequence is a special kind of a coding by rotation, these were studied in \cite{Didier}. 
\end{rem}

\begin{rem}[Adding Bridges]
More bridges like $\GG_0$ and/or $\GG_1$ can be added to the construction. This allows to obtain more general sequences. More precisely: let $x_1,\dots,x_m$ be points in the $R_\theta$ orbit of $0\in\R/\Z$, set $0=x_0$ and $x_{m+1}=1$. Let $\xi:\{0,\dots,m\}\to\{0,1\}$ be an arbitrary function, let us consider the sequence $\overline{\omega}(x)= \xi(k)$ (resp. $\underline{\omega}(x)=\xi(k)$) for the unique $k$ such that $x\in [x_k,x_{k+1})$ (resp. $x\in (x_k,x_{k+1}]$).

Any Ghys manifold whose associated sequence is coded as before can be realized as the leaf of a $C^r$ codimension one foliation on a closed $6$-manifold.
\end{rem}

\begin{rem}[Adding topological symbols]
The symbols in the associated sequence of a Ghys manifold reflect the sequential change in the topology of the manifold. The topological symbols are the ranks of the fundamental groups of the fundamental blocks. Thus, the Ghys manifolds can be defined for sequences over more symbols whenever more fundamental blocks with different ranks are available.

In our realization, the fundamental blocks ultimately appear as boundary leaves of turbulizations over neighborhoods whith different topologies. If we want to realize some nonperiodic Ghys-manifold over $3$-symbols in a codimension one foliation, we can consider a similar construction but increasing the leaf dimension. In this case consider the initial foliation $\FF_0$ as a product of the one dimensional Reeb foliation with $S^3$ (instead of $S^2$). A local section to a transverse loop $\gamma$ is homeomorphic to $\R^4$. Since both $D^3\times S^1$ and $D^2\times T^2$ can be embedded in $\R^4$ it follows that we can make turbulizations along product neighborhoods of $\gamma$ homeomorphic to $D^4\times S^1$, $D^3\times S^1\times S^1$ or $D^2\times T^2\times S^1$. The boundary leaves of the respective generalized Reeb components are $S^3\times S^1$, $S^2\times T^2$ and $T^4$. After the dimensional expansion and the deletion of neighborhoods of the loops $a$ and $2b$ we obtain fundamental blocks of rank $3$, $4$ and $6$ that can be used to describe nonperiodic Ghys manifolds on $3$ symbols as desired.

More symbols can be added at the cost of increasing the leaf dimension of the foliation. The dimensional cost on the leaf topology can be an artifact in our construction but it is unclear how to improve it. Reduce the leaf dimension to $4$ is out of the reach at the present state of the art.
\end{rem}

Of course, the previous modifications can be mixed together in orther to obtain more general repetitive sequences. In any case, all the obtained sequences are, essentially, coding from rotations over finitely many intervals. This is, of course, produced by the particularities of the given foliation $\FF_\varphi$, but it seems very possible to exploit the construction in other ways in order to obtain more general sequences.

\begin{quest}
Is it possible to realize every nonperiodic repetitive Ghys manifold as a leaf of a codimension one foliation?
\end{quest}

\subsection{Topological repetitiveness}

\begin{rem}
Recall that there exists a notion of (end) repetitiveness for Riemannian manifolds (that generalizes the notion of repetitiveness in tilings), moreover, it is known that the leaves of minimal foliated compact spaces without holonomy are repetitive from this geometric point of view (see \cite{Alvarez-Candel2018}), in some sense Proposition~\ref{p:repetitive leaves} can be seen as a corollary of this result. 
\end{rem}

Since we are working in the topological category we are interested in a topological notion of repetitiveness that generalizes the concept of topological periodicity and includes the geometric repetitiveness as a particular case.

\begin{defn}[Topologically repetitive end]
An end $\mathbf{e}$ of a manifold $M$ is called {\em topologically repetitive} if there exists a neighborhood $N$ of $\mathbf{e}$ such that for any compact set $K\subset N$ and every neighborhood system $\{N_n\}_{n\in\N}$ of $\mathbf{e}$ there exist (pairwise disjoint) subsets $K_n\subset N_n$ homeomorphic to $K$.
\end{defn}

In a topologically repetitive end, every compact set sufficiently close to that end must appear infinitely many times accumulating to that end. Therefore a topologically repetitive end implies that there no exists rigid blocks (in the sense of \cite{Menino-Schweitzer2022}) approaching that end. The approximation of ends by rigid blocks is the main tool to produce $C^0$ nonleaves of codimension one foliations and therefore it can be conjectured that this could be a necessary condition for (proper) leaves of codimension one foliations.

\begin{quest}
Does there exist a codimension one foliation on a compact manifold having a proper leaf with finitely many ends such that some of its ends is not topologically repetitive?
\end{quest}

Observe that if the answer to the previous question were negative then the perturbed Ghys manifolds whose associated sequence is not forward or backward repetitive would be nonleaves of codimension one foliations (with no regularity assumptions) that can be realized as leaves in some codimension two foliations.

Recall that every open surface can be realized as a (nonproper) leaf of a $C^\infty$ codimension one foliation \cite{Cantwell-Conlon} and there exist surfaces with non topologically repetitive ends, all of them with infinitely many ends. Therefore the previous question is only interesting for proper leaves with finitely many ends. It is also unknown for the author the existence of an example of a codimension one foliation on a compact manifold having a nonproper leaf with finitely many ends and at least one topologically nonrepetitive end.

\end{document}